\renewcommand{\epsilon}{\varepsilon}
\renewcommand{\phi}{\varphi}
\newtheorem{lemma}{Lemma}
\newtheorem{theorem}{Theorem}
\newtheorem{definition}{Definition}
\newtheorem{remark}{Remark}
\renewcommand\subsection{\@startsection{subsection}{2}%
  \z@{.5\linespacing\@plus.7\linespacing}{.1\linespacing}%
  {\normalfont\scshape}}
\title[$C^{1,\alpha}$ and $C^{2,\alpha}$ estimates for GJEs]{First and Second derivative H\"older estimates for generated Jacobian equations}
\author{Cale Rankin}
\email{}
\thanks{This research is supported by ARC DP 200101084}
\begin{document}
\maketitle
\begin{abstract}
We prove two H\"older regularity results for solutions of generated Jacobian equations. First, that under the A3 condition and the assumption of nonnegative $L^p$ valued data solutions are $C^{1,\alpha}$ for an $\alpha$ that is sharp. Then, under the additional assumption of positive Dini continuous data, we prove a $C^{2}$ estimate. Thus the equation is uniformly elliptic and when the data is H\"older continuous solutions are in $C^{2,\alpha}$. 
\end{abstract}

\setcounter{tocdepth}{1}
\tableofcontents

\section{Introduction}
\label{sec:introduction}

Generated Jacobian equations are a class of PDE which model problems in geometric optics and have recently seen applications to  monopolist problems in economics \cite{MR3121636,MR3957395}. These equations are of the form
\begin{equation}
  \label{eq:gje}
   \det DY(\cdot,u,Du) = \psi(\cdot,u,Du) \text{ in }\Omega,
\end{equation}
where $Y:\mathbf{R}^n \times \mathbf{R} \times \mathbf{R}^n \rightarrow \mathbf{R}^n$, $\Omega$ is a domain, and $u:\Omega \rightarrow \mathbf{R}$ satisfies a convexity condition that ensures the PDE is elliptic.

The precise form of this condition on $u$ and the structure of $Y$ requires a number of definitions which we introduce in Section \ref{sec:gen-func}. However, the framework we work in includes, in addition to the just listed applications, two well-known special cases. First, the Monge--Amp\`ere equation (take $Y(\cdot,u,Du) = Du$).  Second, the Monge--Amp\`ere type equations from optimal transport (take $Y(\cdot,u,Du)=Y(\cdot,Du)$ as the optimal transport map depending on the  corresponding potential function $u$ for the optimal transport problem). Indeed, generated Jacobian equations (GJEs) were introduced to treat the aforementioned new applications using the techniques from optimal transport. That's what we do here: Two fundamental results for the regularity theory in optimal transport are the $C^{1,\alpha}$ result of Liu \cite{MR2476419} and the $C^{2,\alpha}$ results of Liu, Trudinger, Wang \cite{MR2748621}. These are based on the corresponding results of Caffarelli in the Monge--Amp\`ere case, respectively, \cite{MR1038359} and \cite{MR1038360}. In this paper we extend the results of \cite{MR2476419,MR2748621} to generated Jacobian equations. Our main results are stated precisely at the conclusion of Section \ref{sec:gen-func}, once the necessary definitions have been introduced. The importance of our results is that, first, the $C^{1,\alpha}$ (or at least some $C^1$) result is a necessary assumptions in the derivation of models in geometric optics and, second, the $C^{2,\alpha}$ result puts us in the regime of classical elliptic PDE and lets us bootstrap higher regularity. 

We note that the corresponding $C^{1,\alpha}$ result of Loeper in the optimal transport setting \cite{MR2506751} has been extended to generated Jacobian equations by Jeong \cite{MR4265360}. Thus our key contribution for the $C^{1,\alpha}$ result is improving the value of $\alpha$ so that it is sharp. As we explain at the conclusion of Section \ref{sec:gen-func} this yields a corresponding improvement on the $C^{2,\alpha}$ result. An outline of the paper is provided by the table of contents. 

\section{Generating functions, $g$-convexity, and GJEs}
\label{sec:gen-func}

In this section we state the essential definitions. Further introductory material can be found in the expository article of Guillen \cite{MR3967932}, and more detailed outlines of the whole theory in \cite{MR3121636,MR4218157,MR3639322,RankinThesis}.

The framework for GJEs was introduced by Trudinger \cite{MR3121636} and is built around a generalized notion of convexity.  The generating function, which we now define, plays a central role, essentially that of affine hyperplanes in classical convexity. 

\begin{definition}
  A generating function is a function, which we denote by $g$, satisfying the conditions A0,A1,A1$^*$, and A2. 
\end{definition}

\textbf{A0.} $g \in C^4(\overline{\Gamma})$ where $\Gamma$ is a bounded domain of the form
\[ \Gamma = \{(x,y,z) ; x \in U, y \in V,z \in I_{x,y}\},\]
for domains $U,V \subset \mathbf{R}^n$ and $I_{x,y}$ an open interval for each $(x,y) \in U \times V$. Moreover we assume there is an open interval $J$ such that $g(x,y,I_{x,y}) \supset J$ for each $(x,y) \in U \times V$. 

\textbf{A1.} For each $(x,u,p) \in \mathcal{U}$ defined by
\[\mathcal{U} = \{(x,g(x,y,z),g_x(x,y,z)) ; (x,y,z) \in \Gamma\}, \]
there is a unique $(x,y,z) \in \Gamma$, whose $y,z$ components we denote by $Y(x,u,p)$, $Z(x,u,p)$, satisfying
\begin{align}
 \label{eq:yzdef} &g(x,y,z) = u &&g_x(x,y,z) = p.
\end{align}

\textbf{A1$^*$.} For each fixed $y,z$ the mapping $x \mapsto \frac{g_y}{g_z}(x,y,z)$ is injective on its domain of definition. 

\textbf{A2.} On $\overline{\Gamma}$ there holds $g_z<0$ and the matrix
\[ E:=g_{i,j} - g_{z}^{-1}g_{i,z}g_{,j}\]
satisfies $\det E \neq 0.$ Here subscripts before and after a comma denote, respectively, differentiation in $x$ and $y$.

Two examples are $g(x,y,z) = x\cdot y - z$ which generates (in accordance with Definition \ref{defn:gje}) the Monge--Amp\`ere equation and standard convexity, and $g(x,y,z) = c(x,y) - z$, where $c$ is a cost function from optimal transport, which generates the Monge--Amp\`ere type equation from optimal transport, and the cost convexity theory \cite[Chapter 5]{MR2459454}.
By a duality structure, which we do not need and thus don't introduce here, the condition A1$^*$ is dual to $A1$, thereby justifying the name. The A0 condition is weakened by some authors who treat $C^2$ generating functions \cite{MR3639322,MR3797806}. However our interior Pogorelov estimate, an essential tool for the $C^{2,\alpha}$ result, is obtained by differentiating the PDE twice which relies on a $C^4$ generating function. 

\begin{definition}
  Let  $\Omega \subset U$ be a domain and $u \in C^0(\overline{\Omega})$. We call $u$ $g$-convex provided for every $x_0 \in \Omega$ there is $y_0,z_0$ such that
  \begin{align}
\label{eq:supp-eq}    &u(x_0) =  g(x_0,y_0,z_0), \\
    \label{eq:strict}    &u(x) \geq g(x,y_0,z_0) \text{ for all $x \in \Omega$ satisfying $x \neq x_0$},\\
\label{eq:contain}    &g(\overline{\Omega},y_0,z_0) \subset J.
  \end{align}
 When the inequality in \eqref{eq:strict} is strict $u$ is called strictly $g$-convex. When the function $x \mapsto g(x,y_0,z_0)$ satisfies \eqref{eq:supp-eq},\eqref{eq:strict}, and \eqref{eq:contain} it is called a $g$-support of $u$ at $x_0$.
\end{definition}
The set of all $y$ such that there is $z$ for which $g(\cdot,y,z)$ is a $g$-support at $x$ is denoted $Yu(x)$. When $u$ is differentiable and $g(\cdot,y,z)$ is a $g$-support at $x$ then $u-g(\cdot,y,z)$ has a minimum at $x$ implying by \eqref{eq:yzdef} that $y = Y(x,u,Du)$.  Similarly, if $g(\cdot,y,z)$ is a $g$-support at $x$ and $u$ is $C^2$ then $D^2u(x) - g_{xx}(x,y,z)$ is a nonnegative definite matrix.  When $u$ is not differentiable at $x$, $Yu(x)$ is not a singleton.

\begin{definition}\label{defn:gje}
  A generated Jacobian equation is an equation of the form \eqref{eq:gje} where the mapping $Y$ derives from a generating function as in the A1 condition. 
\end{definition}
For a GJE to make sense we must have $(\text{Id},u,Du)(\Omega) \subset \mathcal{U}$. By calculations which are now standard \cite{MR3121636}, a $C^2$ solution of \eqref{eq:gje} satisfies the Monge--Amp\`ere type equation
\begin{equation}
  \label{eq:mate}
  \det[D^2u - A(\cdot,u,Du)] = B(\cdot,u,Du) \text{ in }\Omega,
\end{equation}
where $A,B$ are defined on $U$ by 
\begin{align}
  \label{eq:adef} A_{ij}(x,u,p) &= g_{ij}(x,Y(x,u,p),Z(x,u,p)),\\
  \label{eq:bdef} B(x,u,p) &= \det E(x,u,p) \psi(x,u,p).
\end{align}
This equation is degenerate elliptic provided $u$ is $g$-convex. 

The following definition extends the definition of Aleksandrov solution for the Monge--Amp\`ere equation to generated Jacobian equations.
\begin{definition}
  A $g$-convex function $u:\Omega \rightarrow\mathbf{R}$ is called an Aleksandrov solution of
  \begin{equation}
    \label{eq:gje-def}
    \det DY(\cdot,u,Du) = \nu \text{ in }\Omega,
  \end{equation}
  for $\nu$ a Borel measure on $\Omega$, provided for every Borel $E \subset \Omega$
  \[ |Yu(E)| = \nu(E).\]
\end{definition}

Whilst \eqref{eq:gje-def} would, classically, require a $C^2$ function, we've defined Aleksandrov solutions for merely $g$-convex functions. However it is a consequence of the change of variables formula and the Lebesgue differentiation theorem that $C^2$ Aleksandrov solutions are classical solutions.

We introduce one final condition on the generating function. It was introduced by Ma, Trudinger, and Wang \cite{MR2188047} and was  extended to GJEs in \cite{MR3121636}. The necessity of the weakened form, for even $C^1$ regularity, was proved by Loeper \cite{MR2506751}. 

\textbf{A3.} There is $c>0$ such that 
\[ D_{p_kp_l}A_{ij}(x,u,p)\xi_i\xi_j\eta_k\eta_l \geq c,\]
for all unit vectors $\xi,\eta$ satisfying $\xi \cdot \eta = 0$. \\
The A3 weak (A3w) condition, is the same but with $c=0$. 

\subsection{Statement of main theorems}
\label{sec:stat-main-theor}

\begin{theorem}\label{thm:main1}
  Let $g$ be a generating function satisfying A3. Let $u:\Omega \rightarrow \mathbf{R}$ be a $g$-convex Aleksandrov solution of \eqref{eq:gje-def} with $\nu = f \ dx \in L^p(U)$ for $p > (n+1)/2$. Then $u \in C^{1,\alpha}(\Omega)$ with
  \[ \alpha = \frac{\beta(n+1)}{2n^2+\beta(n-1)} \text{ where } \beta = 1-\frac{n+1}{2p}.\]
\end{theorem}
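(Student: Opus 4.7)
My plan is to adapt the Caffarelli $C^{1,\alpha}$ argument \cite{MR1038359}, via the optimal transport extension of Liu \cite{MR2476419} and the GJE extension of Jeong \cite{MR4265360}, while sharpening the exponent $\alpha$ through a more careful measure-theoretic estimate. The argument is organized around $g$-sections: given a $g$-support $g(\cdot,y_0,z_0)$ at $x_0$, one studies
\[ S_h(x_0) := \{x \in \Omega : u(x) < g(x,y_0,z_0) + h\}. \]
First I would establish that $u$ is strictly $g$-convex, so that $Du$ exists pointwise and sections shrink to points as $h \to 0$. In the GJE setting this follows from a Loeper-type maximum principle \cite{MR2506751,MR4265360}, and requires only mild bounds on $f$ which are certainly implied by $f \in L^p$.

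The geometric engine is a normalization lemma using A3: after a suitable affine change of coordinates $T_h$, the set $T_h(S_h)$ is a convex body of bounded eccentricity. This is where the strict A3 (rather than merely A3w) plays a quantitative role. In these normalized coordinates one combines the Alexandrov-type lower bound $|Yu(S_h)|\,|S_h| \geq c h^n$, which follows from the $g$-convexity of $u$ together with the section geometry, with the H\"older upper bound
\[ |Yu(S_h)| = \int_{S_h} f\,dx \leq \|f\|_{L^p}\, |S_h|^{1-1/p}, \]
to obtain a quantitative lower bound on $|S_h|$ in terms of $h$ and $\|f\|_{L^p}$. The eccentricity control then converts this into a lower bound on $\operatorname{diam}(S_h)$, while the $g$-convexity provides a matching upper bound.

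An iteration in the spirit of Caffarelli--Jian--Wang then shows that, in normalized coordinates, $S_{\lambda h}$ is contained in a strict rescaling of $S_h$ for some $\lambda < 1$ depending only on the data. Iterating gives geometric decay of both section heights and diameters, and the relation between them yields $\operatorname{osc}_{B_r(x_0)} Du \leq C r^\alpha$ with the claimed value of $\alpha$.

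The main obstacle will be to track the exponents sharply through this iteration. Previous GJE work was content to produce some $\alpha > 0$; the sharp value stated here requires optimizing the interplay between the Alexandrov volume bound, the $L^p$ estimate, the A3-eccentricity bound, and the section shrinkage rate, which is precisely the balance encoded in the formula for $\alpha$. A GJE-specific subtlety, absent from Monge--Amp\`ere, is that $Y(x,u,p)$ depends on $(u,p)$ as well as $x$, so the affine normalizations must be accompanied by modifications of the $g$-affine supports in order to preserve the generating function structure and the A3 condition across iterations.
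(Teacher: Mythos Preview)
Your outline follows the Loeper/Jeong route: normalize sections, control eccentricity via A3, and iterate \`a la Caffarelli to produce $C^{1,\alpha}$. That approach is already in the literature for GJE (Jeong \cite{MR4265360}) and, as the paper notes in the introduction, it yields only \emph{some} $\alpha>0$, not the sharp one. The difficulty you flag at the end --- ``optimizing the interplay'' through the iteration --- is real, and in fact the iteration does not produce the stated exponent no matter how carefully one tracks constants.

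The paper avoids iteration entirely. The sharp exponent comes from a single quantitative consequence of A3, Lemma~\ref{lem:lemma-shape-sections}: if $r_1\ge\dots\ge r_n$ are the semi-axes of the minimum ellipsoid of $S_h(x_0)$ in the convex coordinates \eqref{eq:x-trans}, then
\[
h\,\frac{r_1^2}{r_n^2}\le C.
\]
This is \emph{not} a bounded-eccentricity statement --- the ratio $r_1/r_n$ may blow up like $h^{-1/2}$ --- and your description ``$T_h(S_h)$ is a convex body of bounded eccentricity'' misidentifies what A3 delivers. Once this lemma is in hand, the proof is direct: combine the Aleksandrov-type bound $h^n\le C|S_h|\,\nu(S_h)$ (Lemma~\ref{lem:sec1}), H\"older's inequality $\nu(S_h)\le\|f\|_{L^p}|S_h|^{1-1/p}$, and $|S_h|\sim r_1\cdots r_n$ to obtain $h^n\le C(r_1\cdots r_n)^{2-1/p}$, then feed in the eccentricity bound above and conclude exactly as in Liu~\cite{MR2476419}. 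No iteration is required, and the exponent falls out algebraically.

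A minor point: rather than first proving strict $g$-convexity via a Loeper-type argument as you propose, the paper handles the non-strict case by perturbing to $u+\epsilon|x-x_0|^2$, which is strictly $g$-convex on a uniform neighbourhood, and letting $\epsilon\to0$.
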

\begin{remark}
  Liu proved this value, $\alpha = (2n-1)^{-1}$ when $p=\infty$, is sharp. That is, there exists a function $u$ which solves a GJE satisfying the hypothesis of the theorem, and which is in $C^{1,\alpha}$ for the stated $\alpha$, but not in $C^{1,\alpha+\epsilon}$ for any $\epsilon > 0$.
  We note that Loeper \cite{MR2506751} and Jeong \cite{MR4265360} have proved the H\"older regularity for the right-hand side a measure $\nu$ satisfying $\nu(B_\epsilon(x)) \leq C \epsilon^{n(1-1/p)}$. Our proof is easily adapted to this condition (which is more general than above but comes at the expense of a smaller $\alpha$). We indicate the necessary changes in a remark after the proof of Theorem \ref{thm:main1}. 
\end{remark}

Our $C^2$ estimate is for $\nu = f \ dx $ where $f$ is Dini continuous.
\begin{definition}[Dini continuity] \label{defn:dini}
  Let $f:\Omega \rightarrow\mathbf{R}$. The oscillation of $f$ is
\[ \omega_f(r) := \sup\{|f(x)-f(y)| ; x,y \in \Omega \text{ with }|x-y|< r\}. \]
Then $f$ is called Dini continuous if
\[ \int_0^1 \frac{\omega_f(r)}{r} \ dr < \infty. \]
\end{definition}

\begin{theorem}\label{thm:main2}
Let $g$ be a generating function satisfying A3. Let $u:\Omega \rightarrow \mathbf{R}$ be an Aleksandrov solution of \eqref{eq:gje-def} with $\nu = f \ dx $. If $\lambda \leq f \leq \Lambda$ and $f$ is Dini continuous then $u \in C^2(\Omega)$.  If $f \in C^\alpha(\Omega)$ then $u \in C^{2,\alpha}_{loc}(\Omega)$. 
\end{theorem}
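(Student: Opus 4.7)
The plan is to follow the scheme of Caffarelli \cite{MR1038360} and Liu--Trudinger--Wang \cite{MR2748621}, carrying it out in the $g$-convex category. The input from Theorem \ref{thm:main1} is crucial: $u \in C^{1,\alpha}(\Omega)$ implies that $u$ is strictly $g$-convex, that its $g$-sections
\[ S_h(x_0) = \{x \in \Omega : u(x) < g(x,y_0,z_0) + h\} \]
(where $g(\cdot, y_0, z_0)$ is a $g$-support of $u$ at $x_0$) shrink to $x_0$ as $h \to 0$, and that they are compactly contained in $\Omega$ for small $h$. After applying John's lemma and a $g$-affine normalization (composing with an affine change of the $x$-variable and subtracting off a $g$-support, while using the duality built into A1$^*$ to transform the $y$-variable), one may assume the section is comparable to a unit ball and that the rescaled generating function still satisfies A0--A3 with uniform constants.

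The central a priori tool I would establish is an interior Pogorelov-type $C^2$ estimate: for smooth $g$-convex solutions of \eqref{eq:mate} with smooth, strictly positive right-hand side, A3 yields an interior bound on $|D^2 u|$ inside any normalized $g$-section, in terms of the normalization constants and the oscillation of $u$. This is obtained by twice differentiating the equation and exploiting the good term supplied by A3; as noted in Section \ref{sec:gen-func}, this is precisely the step that requires $g \in C^4$.

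With these ingredients I would run a perturbation-and-iteration argument. Fix $x_0 \in \Omega$ and work in a small normalized $g$-section $S_h(x_0)$. Compare $u$ to the $g$-convex solution $w$ of \eqref{eq:gje-def} with $f$ replaced by the constant $f(x_0)$ and sharing boundary data with $u$ on $\partial S_h(x_0)$. A stability estimate for the $g$-Monge--Amp\`ere measure $|Yu(\cdot)|$, in the spirit of the ABP-based comparison, controls $\|u-w\|_{L^\infty(S_h)}$ by $\omega_f(\operatorname{diam} S_h)$. The Pogorelov estimate applies to $w$, so on a concentric section of proportionally smaller height $w$ is approximated to second order by a quadratic polynomial $P$ (a $g$-support plus a pure quadratic correction). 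Consequently $u$ is approximated by $P$ with an error controlled by $\omega_f(\operatorname{diam} S_h)$ plus a geometrically decaying term. Iterating on a geometric sequence $S_{h_k}(x_0)$ with $h_k \downarrow 0$ and summing the errors produces a Cauchy sequence of quadratic approximations whose limit is a second-order Taylor polynomial of $u$ at $x_0$. The series converges thanks to the Dini hypothesis $\int_0^1 \omega_f(r)/r\,dr < \infty$, so $u \in C^2(\Omega)$. When $f \in C^\alpha$ the errors decay geometrically, promoting the estimate to $u \in C^{2,\alpha}_{\mathrm{loc}}(\Omega)$.

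The main obstacle is the normalization step: unlike in the Monge--Amp\`ere case, a linear affine change of coordinates does not preserve an arbitrary generating function, so one must perform the rescaling by $g$-affine transformations, track how the structural quantities $A,B,E$ transform, and verify that the constants in A0--A3 remain uniform as the scale $h_k \to 0$. A secondary difficulty is arranging the ABP-type stability estimate with the correct scaling behaviour; this again leans on the strict $g$-convexity and the non-degenerate sections supplied by Theorem \ref{thm:main1}, without which the inner and outer radii of $S_h(x_0)$ could not be controlled as $h \to 0$.
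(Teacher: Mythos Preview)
Your outline is correct and follows the same Caffarelli/Liu--Trudinger--Wang perturbation--iteration scheme the paper adopts. The execution differs mainly in bookkeeping: rather than tracking a Cauchy sequence of quadratic polynomials, the paper runs an induction on $k$ showing $|D^2u_k|\le M$ in subsections, where $u_k$ solves the equation with constant right-hand side $f_k=\inf_{U_k}f$ on the section $U_k$ of height $4^{-k}$; consecutive $u_k,u_{k+1}$ are compared via Evans--Krylov plus interior Schauder (Lemma~\ref{lem:int-schauder}) rather than an ABP argument, and the resulting uniform good-shape of the $U_k$ is converted to $|D^2u(x_0)|\le C$ by separate lemmas (Lemmas~\ref{lem:deriv-imp-shape} and \ref{lem:shape-imp-deriv}). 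The normalization obstacle you correctly flag is precisely where the paper spends its effort: it is resolved by the A3 shape estimate $hr_1^2/r_n^2\le C$ for sections (Lemma~\ref{lem:lemma-shape-sections}) together with the two-sided bound \eqref{eq:scd}, which keep $\|\overline{A}\|_{C^2}$ uniformly controlled under rescaling; note that the Pogorelov estimate itself (Lemma~\ref{lem:int-pogorelov}) only needs A3w, while A3 enters through this normalization. Finally, the paper proves only the interior $C^2$ bound directly and then quotes \cite{MR2273802} for the precise Dini and $C^{2,\alpha}$ conclusions once the equation is uniformly elliptic.
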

\begin{remark}
  Our result can be stated more precisely as follows. Under the above hypothesis for each $\Omega' \subset \subset \Omega$ there is $C>0$ depending on $g,\Omega',\Omega,\Vert u \Vert_{C^1(\Omega)}$ such that
  \begin{enumerate}
  \item If $f$ is Dini continuous we have
    \begin{equation}
      \label{eq:dini}
      |D^2u(x)-D^2u(y)| \leq C\left[d+\int_0^d\frac{\omega_f(r)}{r} \ dr + d \int_d^1\frac{\omega_f(r)}{r^2} \ dr\right],
      \end{equation}
      where $x,y \in \Omega'$ and $d:= |x-y|$.\\
    \item If $f\in C^\alpha(\Omega)$ for some $\alpha \in (0,1)$ then $u \in C^{2,\alpha}(\Omega)$ with
      \begin{equation}
        \label{eq:holder}
\Vert u \Vert _{C^{2,\alpha}}(\Omega') \leq C+\frac{C}{\alpha(1-\alpha)}\Vert f \Vert_{C^\alpha(\Omega)}.
      \end{equation}
    \end{enumerate}
    We do not prove \eqref{eq:dini} and \eqref{eq:holder} directly --- we just prove a $C^2$ estimate. This ensures the equation is uniformly elliptic then \eqref{eq:dini} and \eqref{eq:holder} follow from \cite[Theorem 3.1]{MR2273802} (details in Section \ref{sec:c2-alpha-regularity}).
  \end{remark}
  \begin{remark}
    A common form of $\nu$ arising in applications is
    \[ \nu = \frac{f(x)}{f^*(Y(x,u,Du))} \ dx.\]
    Theorem \ref{thm:main1} applies whenever $f \in L^p$ and $f^*> \lambda $ for some positive constant $\lambda$. Then the assumption of H\"older continuous right-hand side in Theorem \ref{thm:main2} is satisfied when $\lambda < f,f^* < \Lambda$ and $f,f*$ are H\"older continuous. More precisely if $f \in C^{\beta}, f^* \in C^{\gamma}$ then Theorem \ref{thm:main1} implies the sharp exponent with $p = \infty$ and we subsequently obtain $u \in C^{2,\alpha'}$ for $\alpha' = \text{min}\{\beta,\gamma/(2n-1)\}$.
  \end{remark}

  As is standard we prove apriori estimates for smooth solutions. The results then hold by approximation and uniqueness of the Dirichlet problem in the small \cite[Lemma 4.6]{MR3121636}. The results above all hold without boundary conditions, that is they are interior local results. This is possible because we are considering Aleksandrov solutions under A3. With A3 weakened to A3w such results are not possible. Moreover for applications to optimal transport to conclude that a potential function is an Aleksandrov solution we require also a boundary condition - the second boundary value problem with a target satisfying a convexity condition \cite{MR2188047}.  

\section{Background results and normalization lemma}
\label{sec:background-results}

We will use a number of background results. These originally appeared in \cite{MR3639322} though we'll use the formulation in \cite{Rankin2021}.

We assume $g$ is a generating function satisfying A3w and $u$ is a strictly $g$-convex function with $\det DYu =: \nu$ in the Aleksandrov sense.  Let $x_0$ be given and $g(\cdot,y_0,z_0)$ a support at $x_0$ put $z_h = z_0-h$ where $h>0$ and define the section
\[ S_h = S_h(x_0) := \{ u < g(\cdot,y_0,z_h)\},\]
which by the strict $g$-convexity is compactly contained in $\Omega$ for sufficiently small $h$. 

A lemma \cite[Theorems 3,5]{Rankin2021}, which we employ repeatedly is the following. 
\begin{lemma}\label{lem:sec1}
  There exists $C,d>0$ which depend only on $g$, such that if $\text{diam}(S_h) < d$ and  $\nu$ is a doubling measure, then
  \[C^{-1}|S_h|\nu(S_h) \leq \sup_{S_h}|u(\cdot)-g(\cdot,y_0,z_h)|^n \leq C|S_h|\nu(S_h).  \]
\end{lemma}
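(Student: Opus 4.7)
The plan is to reduce to the classical Monge--Amp\`ere section estimate of Caffarelli and Guti\'errez: under $\text{diam}(S_h)<d$, the support $g(\cdot,y_0,z_h)$ is a $C^4$-small perturbation of its affine linearization at the centre of $S_h$, so the same cone and maximum principle arguments apply after a John's lemma normalization that rounds $S_h$ to a ball-like region. The two inequalities come from opposite barriers: the upper bound from a $g$-cone construction placed below $u$, and the lower bound from the Aleksandrov-type maximum principle for $g$-convex functions.

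For the upper bound $\sup_{S_h}|u-g(\cdot,y_0,z_h)|^n \leq C|S_h|\nu(S_h)$, let $M$ denote the supremum and pick $x^*\in S_h$ at which it is attained. I would build a ``$g$-cone'' $\Gamma$ with vertex value $u(x^*)$ at $x^*$ and boundary values equal to $g(\cdot,y_0,z_h)$ on $\partial S_h$; this is the $g$-convex envelope of the prescribed boundary data together with the pointwise constraint at $x^*$. By a $g$-convex comparison $\Gamma \geq u$ on $S_h$, hence $Y\Gamma(x^*)\subset Yu(S_h)$ and so $|Y\Gamma(x^*)|\leq\nu(S_h)$. Up to the perturbation coming from A0--A2 on a set of small diameter, the $Y$-image of $\Gamma$ at its vertex is essentially the polar of $S_h-x^*$ scaled by $M$; its volume is bounded below by $cM^n/|S_h|$ by elementary convex geometry combined with the John-normalized shape of $S_h$. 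Rearranging gives the claim.

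For the lower bound $c|S_h|\nu(S_h)\leq M^n$, I would apply the Aleksandrov maximum principle in its $g$-convex form (as available in \cite{MR3639322,Rankin2021}) to $u-g(\cdot,y_0,z_h)$, which vanishes on $\partial S_h$ and has infimum $-M$ in the interior. The typical conclusion is a bound $M^n \geq c|S|\nu(S)$ on a slightly shrunken sub-section $S\subset S_h$, and the doubling hypothesis on $\nu$ is invoked precisely to replace $\nu(S)$ by $\nu(S_h)$ at the cost of a harmless constant. The main obstacle will be the careful handling of the $g$-cone: in the affine Monge--Amp\`ere setting the cone is elementary and the comparison $Y\Gamma(x^*)\subset Yu(S_h)$ is standard convex analysis, but in the $g$-convex setting both the construction of $\Gamma$ and the comparison rely essentially on A1, A1$^*$, and A2, and only become transparent after exploiting the small-diameter regime to linearize $g$ about a base point.
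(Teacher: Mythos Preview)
The paper does not actually prove this lemma: it is stated as a background result and attributed to \cite[Theorems 3, 5]{Rankin2021}, with no argument given here. So there is no in-paper proof to compare against; your sketch is being measured only against the standard proof of such Aleksandrov-type section estimates in the $g$-convex setting.

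On that score your outline is broadly correct and matches the usual strategy, but two points are imprecise. First, in the upper bound you assert ``by a $g$-convex comparison $\Gamma \geq u$ on $S_h$'' and deduce $Y\Gamma(x^*)\subset Yu(S_h)$ from it. The global inequality $\Gamma\geq u$ need not hold, and in any case the inclusion does not follow from it in the way you suggest. What one actually uses is that any $g$-support of $\Gamma$ at $x^*$ agrees with $u$ at $x^*$ and lies below $u$ on $\partial S_h$ (since $\Gamma=u$ there); hence $u$ minus that support has a nonpositive interior minimum, at which point it is a $g$-support of $u$. That is the correct route to $Y\Gamma(x^*)\subset Yu(S_h)$. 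Second, you label the lower-bound argument ``Aleksandrov maximum principle'', but that name is reserved for the upper bound $M^n\leq C|S_h|\,\nu(S_h)$. The lower bound is the complementary gradient-containment estimate: on a shrunken sub-section the $Y$-image is trapped in a set of volume $\leq CM^n/|S_h|$, and then doubling upgrades $\nu$ of the sub-section to $\nu(S_h)$. Your description of the mechanics (sub-section plus doubling) is right; only the label is off.
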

Note the requirement that $\nu$ is a doubling measure is only necessary for the lower bound. Also, $
C^{-1}h\leq  \sup_{\Omega}|u(\cdot)-g(\cdot,y_0,z_h)| \leq Ch$ for $C > 0$ depending only on $\sup |g_z|, \inf |g_z|$.  
In the special case where $\nu = f \ dx $ and $\lambda \leq f \leq \Lambda$ we have 
\begin{equation}
  \label{eq:sec-est}
   C^{-1}\lambda|S_h|^2 \leq h^{n} \leq C\Lambda|S_h|^2.  
 \end{equation}

 We introduce new coordinates
\begin{align}
 \label{eq:x-trans} \tilde{x}&:= \frac{g_y}{g_z}(x,y_0,z_h),
\end{align}
When A3w is satisfied $S_h(x_0)$ is convex in the $\tilde{x}$ coordinates \cite[Lemma 2.3]{MR4218157}. 
We often use this result in conjunction with the minimum ellipsoid. The minimum ellipsoid of an arbitrary open convex set $U$ is the unique ellipsoid of minimal volume, denoted $E$, containing $U$. It satisfies $\frac{1}{n}E \subset U\subset E$ where this is dilation with respect to the centre of the ellipsoid \cite[Lemma 2.1]{MR3381500}. We assume, after a rotation and translation that the minimum ellipsoid of $S_h$ is
\begin{equation}
      \label{eq:min-ellipsoid}
 E = \left\{x ; \sum_{i=1}^n \frac{x_i^2}{r_i^2} \leq 1\right\}, \text{ where }r_1 \geq \dots \geq r_n.
\end{equation}
Then elementary convex geometry implies $c_n r_1\dots r_n \leq |S_h| \leq C_n r_1 \dots r_n$ and \eqref{eq:sec-est} becomes
\begin{equation}
  \label{eq:sec-ellips}
  C^{-1} r_1 \dots r_n \leq h^{n/2} \leq Cr_1 \dots r_n. 
\end{equation}

We also define here the good shape constant. Let $U$ be any convex set, and assume its minimum ellipsoid is given by \eqref{eq:min-ellipsoid}. Then \textit{a} good shape constant is any $C$ satisfying $C \geq r_1/r_n$ and \textit{the} good shape constant is just $r_1/r_n$, explicitly, the infimum of all good shape constants. For solutions of generated Jacobian equations the good shape constant of $S_h(x_0)$ carries information about $|D^2u(x_0)|$ (see Lemmas \ref{lem:deriv-imp-shape} and \ref{lem:shape-imp-deriv}). 

When A3w is strengthened to A3 we have a particularly strong estimate concerning the geometry of sections and their height. It is used repeatedly throughout this paper. In optimal transport this estimate is due to Liu \cite[Lemma 4]{MR2476419} and we largely follow his proof. 

\begin{lemma}\label{lem:lemma-shape-sections}
  Assume $g$ is a generating function satisfying A3 and $u:\Omega \rightarrow \mathbf{R}$ is a $C^2$ $g$-convex function. Assume that $S_h(x_0) \subset \subset \Omega$ and that the minimum ellipsoid of $S_h(x_0)$ (in the $\tilde{x}$ coordinates) is \eqref{eq:min-ellipsoid}.  Then there is $C$ depending only on $g$ such that
  \[ \frac{hr_1^2}{r_n^2} \leq C. \]
\end{lemma}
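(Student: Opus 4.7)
The plan is to follow Liu's argument from the optimal-transport setting \cite[Lemma 4]{MR2476419}, with the ``$g$-segment at $x_0$'' (built from A1$^*$) playing the role of Liu's $c$-segment and a quantitative form of the A3 condition furnishing the crucial second-order estimate.

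After the change of variables \eqref{eq:x-trans}, $S_h(x_0)$ is convex, so I may assume (after a rotation and translation in the $\tilde{x}$ coordinates) that its minimum ellipsoid is as in \eqref{eq:min-ellipsoid}, centred at the image of $x_0$. I select boundary points $\tilde{x}_1,\tilde{x}_n\in\partial\tilde{S}_h$ at distances of order $r_1,r_n$ from $\tilde{x}_0$ along the long axis $e_1$ and short axis $e_n$ respectively, and let $x_1,x_n\in\Omega$ be their preimages; then $u(x_i)-g(x_i,y_0,z_0)\asymp h$ for $i=1,n$. Put $(y_1,z_1):=(Y,Z)(x_1,u(x_1),Du(x_1))$, so $g(\cdot,y_1,z_1)$ is the $g$-support of $u$ at $x_1$. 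Using A1$^*$, form the $g$-segment $(y_t,z_t)_{t\in[0,1]}$ at $x_0$ joining $(y_0,z_0)$ to $(y_1,z_1)$: the unique family with $(g_y/g_z)(x_0,y_t,z_t)$ affine in $t$ and $g(x_0,y_t,z_t)=u(x_0)$. Under A3, Loeper's maximum principle for GJEs \cite[Lemma 2.3]{MR4218157} guarantees $g(\cdot,y_t,z_t)\le u$ globally for every $t$, so each is a $g$-support of $u$ at $x_0$.

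The heart of the proof is to upgrade this maximum principle to a quantitative second-order estimate. By Taylor expanding $t\mapsto g(x,y_t,z_t)$ about $t=1/2$ and exploiting the A3 bound on $D_{p_kp_l}A_{ij}\xi_i\xi_j\eta_k\eta_l$, one obtains an inequality of the form
\[
g(x,y_{1/2},z_{1/2})-\tfrac{1}{2}\bigl[g(x,y_0,z_0)+g(x,y_1,z_1)\bigr] \geq c\,|y_1-y_0|^2\cdot d_\perp(\tilde{x},\tilde{x}_0)^2,
\]
where $d_\perp$ denotes distance in $\tilde{x}$ from the line $\overline{\tilde{x}_0\tilde{x}_1}$; this is the GJE analog of Liu's Claim 4.3, in the spirit of the estimates carried out in \cite{Rankin2021}. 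Evaluating at $x=x_n$ gives a lower bound in terms of $|y_1-y_0|$, $r_1$, and $r_n$, which combined with the upper bound $u(x_n)-g(x_n,y_0,z_0)\lesssim h$ (from $x_n\in\bar{S}_h$) and the pointwise domination $g(\cdot,y_t,z_t)\le u$ yields the claimed $hr_1^2/r_n^2\le C$.

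The main obstacle I expect is the correct form of the quantitative A3 inequality. Unlike in pure optimal transport, the $g$-segment carries a nontrivial $z_t$-component (chosen to keep all the supports passing through $u(x_0)$), and its contribution to the Taylor expansion in $t$ must be tracked with care so that it does not absorb the A3 positivity in the mixed fourth-order derivative. The secondary obstacle is the bookkeeping in the closing sandwich: the three supports $g(\cdot,y_0,z_0)$, $g(\cdot,y_{1/2},z_{1/2})$, $g(\cdot,y_1,z_1)$ must all be compared to $u(x_n)$ with the $h$-error accounted for, using only that $x_n$ lies in $\bar{S}_h$ relative to $(y_0,z_0)$. Both are technical but standard within the generating-function framework already developed in \cite{Rankin2021}.
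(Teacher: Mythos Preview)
Your approach is genuinely different from the paper's and, as written, does not close. The paper does \emph{not} argue via $g$-segments and a quantitative Loeper inequality. Instead it normalizes the section by $T(x)=(x_1/r_1,\dots,x_n/r_n)$ so that $B_{1/n}\subset TS_h\subset B_1$, writes $\partial(TS_h)$ locally as a graph $x_n=\rho(x')$ near a point on the short axis, and differentiates the identity $v(\gamma(t))=0$ (with $v=u-g(\cdot,y_0,z_h)$) twice along a curve $\gamma$ in the $e_1$ direction. This yields $D_{\dot\gamma\dot\gamma}v=-D_nv\,\rho_{11}$; an application of A3 to the pair $\xi=(r_i\dot\gamma_i)$, $\eta=(D_iv/r_i)$ (which are orthogonal because $\dot\gamma\perp Dv$) gives $D_{\dot\gamma\dot\gamma}v\ge c|D_nv|^2 r_1^2/r_n^2$. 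Since $|\rho_{11}|\le C(n)$ by the normalization and $|D_nv|\ge Ch$ by a one-variable differential inequality (the A3w estimate $h''\ge -K|h'|$ along a ray), the bound $hr_1^2/r_n^2\le C$ follows. The A3 condition enters only once, on a single second-derivative term, and the closing is immediate.

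Your outline has two concrete problems. First, the displayed inequality has the wrong sign: under A3 the map $t\mapsto g(x,y_t,z_t)$ is (quantitatively, up to a $-K|h'|$ term) convex, so the midpoint lies \emph{below} the average of the endpoints, i.e.
\[
\tfrac12\bigl[g(x,y_0,z_0)+g(x,y_1,z_1)\bigr]-g(x,y_{1/2},z_{1/2})\ \ge\ c\,|y_1-y_0|^2\,d_\perp^2,
\]
not the reverse. Second, and more seriously, with the correct sign the sandwich at $x_n$ does not close: bounding the left side by $Ch$ would require a \emph{lower} bound on $g(x_n,y_{1/2},z_{1/2})$ (or on $g(x_n,y_1,z_1)$), and the support property only gives upper bounds $g(\cdot,y_t,z_t)\le u$. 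The information $x_n\in\partial S_h$ controls only $u(x_n)-g(x_n,y_0,z_0)$, not the other two terms. You flag this as a ``secondary obstacle,'' but it is the essential one, and it is precisely what the paper's boundary-curvature argument sidesteps by working with the single defining function $v=u-g(\cdot,y_0,z_h)$, which vanishes identically on $\partial S_h$.
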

\begin{proof}
  We work in the $\tilde{x}$ coordinates, though keep the notation $x$. Define $T:\mathbf{R}^n \rightarrow \mathbf{R}^n$ by
  \begin{equation}
    \label{eq:t-def}
    T(x) = (x_1/r_1,\dots,x_n/r_n).
  \end{equation}
    Note
  \begin{equation}
    \label{eq:norm}
     B_{1/n} \subset U:=TS_h \subset B_1.
  \end{equation}
  Let $\hat{x}$ be the boundary point of $U$ on the negative $x_n$ axis. In a neighbourhood of $\hat{x}$, denoted $\mathcal{N}$, we represent $\partial \Omega$ as a graph of some function $\rho$, that is
  \[ \partial U \cap \mathcal{N}= \{ (x',\rho(x')) ; x' = (x_1,\dots,x_{n-1})\}.\]
  Using \eqref{eq:norm} we may assume $\rho$ is defined for $|x'| < 1/n$. Similarly by \eqref{eq:norm} and the convexity of $U$ we conclude $|D\rho| \leq c(n)$ when $|x'| \leq 1/2n$. Let $\gamma$ be the curve
  \[ \gamma = \{ \gamma(t) = (t,0,\dots,0,\rho(te_1)); -1/4n < t < 1/4n\}. \]
  Because \eqref{eq:norm} implies $|\rho_{11}(\gamma(t))| \leq C(n)$ at some $t \in (-1/4n,1/4n)$  the proof will be complete provided we can show that
  \begin{equation}
    \label{eq:proof1-toshow}
     C|\rho_{11}(\gamma(t))| \geq hr_1^2/r_n^2,
   \end{equation}
   for every $t \in (-1/4n,1/4n)$.

  Let $v:U \rightarrow \mathbf{R}$ be defined on $\overline{x} \in U$ by
  \[ v(\overline{x}) = u(T^{-1}\overline{x}) - g(T^{-1}\overline{x},y,z_h).\]
 Differentiating $v(\gamma(t)) = 0$ once, then twice, with respect to $t$ gives
  \begin{align}
    \label{eq:orthog-reln} D_{k}v \dot{\gamma}_k &= 0,\\
    \label{eq:curv-reln} D_{\dot{\gamma}\dot{\gamma}}v &= -D_{k}v\ddot{\gamma}_k =  -D_nv\rho_{11}. 
  \end{align}
  To estimate $D_{\dot{\gamma}\dot{\gamma}}v =  D_{kl}v\dot{\gamma}_k\dot{\gamma}_l $ we compute at $\overline{x} = Tx$
  \begin{align*}
    D_{kl}v &= [ u_{kl}-g_{kl}(x,y_0,z_h)]r_kr_l\\
                                     &= [ u_{kl}-g_{kl}(x,Yu(x),Zu(x))]r_kr_l \\
            &\quad+ [g_{kl}(x,Yu(x),Zu(x))-g_{kl}(x,y_0,z_h)]r_kr_l\\
    &\geq [g_{kl}(x,Yu(x),Zu(x))-g_{kl}(x,y_0,z_h)]r_kr_l.
  \end{align*}
  The inequality is by $g$-convexity of $u$. 
 Put $g_h(\cdot):= g(\cdot,y_0,z_h)$. Then use the definition of $A$ (equation \eqref{eq:adef}), $u(x) = g_h(x)$ on $\partial S_h$, and a Taylor series to obtain
  \begin{align}
\nonumber    D_{kl}v &\geq [A_{kl}(x,u(x),Du(x)) - A_{kl}(x,u(x),Dg_h(x))  ] r_kr_l\\
    \label{eq:10}                                  &= A_{kl,p_m}(x,u(x),Dg_h(x))D_m(u-g_h)r_kr_l\\
                                     &\quad\quad+A_{kl,p_mp_n}(x,u(x),p_\tau)D_m(u-g_h)D_n(u-g_h)r_kr_l,
  \end{align}
  where $p_\tau = \tau Du(x)+ (1-\tau)Dg_h(x)$ for some $\tau \in (0,1)$.  A direct, but involved, calculation which we relegate to Appendix \ref{sec:vanishing} implies
  \[  A_{kl,p_m}(x,u(x),Dg_h(x))D_m(u-g_h)r_kr_l \dot{\gamma}_k\dot{\gamma}_l = 0.\]
  Thus, using also $D_m(u-g_h) = D_mv/r_m$, \eqref{eq:10} becomes
  \[    D_{kl}v  \geq A_{kl,p_mp_n}D_mvD_nv\frac{r_kr_l}{r_mr_n}.  \]

  Now returning to $D_{\dot{\gamma}\dot{\gamma}}v$ we have
  \begin{equation}
   D_{\dot{\gamma}\dot{\gamma}}v  \geq A_{kl,p_mp_n}\frac{D_mv}{r_m}\frac{D_nv}{r_n}(\dot{\gamma}_kr_k)(\dot{\gamma}_lr_l).\label{eq:dgg}
    \end{equation}
    Since by \eqref{eq:orthog-reln} $\dot{\gamma}$ is orthogonal to $Dv$ we also have orthogonality of $\xi:= (r_1\dot{\gamma}_1,\dots,r_n\dot{\gamma}_n)$ and $\eta := (D_1v/r_1,\dots,D_nv/r_n)$. Thus employing the A3 condition in \eqref{eq:dgg} yields
    \[  D_{\dot{\gamma}\dot{\gamma}}v \geq c|\xi|^2|\eta|^2 \geq c\frac{|D_nv|^2r_1^2}{r_n^2}.\]
    Now we substitute this into \eqref{eq:curv-reln} to obtain
    \[ \rho_{11} \geq c\frac{|D_nv|r_1^2}{r_n^2}.\]
    If we can show  $|D_nv| \geq Ch$ then we've obtained \eqref{eq:proof1-toshow}.

 For this final estimate fix $x_1 \in \partial S_h$ and set
  \[ h(\theta) := g(x_\theta,y_1,z_1) - g(x_\theta,y_0,z_h) \text{ where }x_\theta = \theta x_1 + (1-\theta)x_0,\]
  where $g(\cdot,y_1,z_1)$ supports $u$ at $x_1$ so $Du(x_1) = g_x(x_1,y_1,z_1)$.  A standard argument \cite[Eq. A.14]{RankinThesis} using the A3w condition implies that for $K$ depending only on $g$, 
  \[ h''(\theta) \geq - K|h'(\theta)|.\]
  Then we follow \cite[Eq. 19]{MR4308251} (there are similar arguments in \cite{MR2459454,MR3639322,MR4218157,MR3121636}) to obtain
  \[ h'(t_1) \leq e^{K(t_2-t_1)}h'(t_2),\]
  for $t_1 < t_2$. Choosing $t_2 = 1$ and integrating from $t_1 = 0$ to $1$ we have
  \begin{equation}
    \label{eq:h}
       -h(0) \leq C(K) h'(1).
  \end{equation}
  Now $-h(0) = g(x_0,y_0,z_h) - g(x_0,y_1,z_1)) \geq \inf|g_z|h $ and
\[ |h'(1)| = |D_iu (x_1-x_0)_i| = \left|D_iv \frac{(x_1-x_0)_i}{r_i}\right|  \leq |Dv|, \]
 where we've used that by the minimum ellipsoid $|(x_1-x_0)_i|/r_i \leq 2$. Thus \eqref{eq:h} becomes $|Dv| \geq ch$. To conclude we control $|Dv|$ by $|D_nv|$. Indeed, by \eqref{eq:orthog-reln}  $D_1v + \rho_1D_nv = 0$, similar reasoning implies $D_iv+\rho_iD_nv$ for $i =2,3,\dots,n-1$. Thus
  \[ Dv = (-\rho_1D_nv,\dots,-\rho_{n-1}D_{n}v,D_nv).\]
  Recalling $|D\rho| \leq C$ we complete the proof with the observation $|Dv| \leq C |D_nv|$.
\end{proof}

\section{$C^{1,\alpha}$ regularity}
\label{sec:c1-alpha-regularity}

The $C^{1,\alpha}$ regularity is essentially an immediate consequence of Lemmas \ref{lem:sec1} and \ref{lem:lemma-shape-sections}. The proof of Theorem \ref{thm:main1} is as follows. 

\begin{proof}  \textit{Step 1. [Proof for strictly $g$-convex functions]} 
  Fix $x_0$, without loss of generality equal to $0$, and then $h$ sufficiently small to ensure $S_h(x_0) \subset \subset \Omega$.  By Lemma \ref{lem:sec1}
  \begin{equation}
    \label{eq:h-est}
     h^n \leq C|S_h|\nu(S_h). 
  \end{equation}
  Then assuming the minimum ellipsoid of $S_h(x_0)$ is given by \eqref{eq:min-ellipsoid}, \eqref{eq:h-est} becomes
  \[ h^n \leq C(r_1\dots r_n)\int_{S_h}f \ dx. \]
  Using H\"older's inequality (with the second function equal to 1) we have
  \begin{align}
\nonumber    h^n &\leq C (r_1\dots r_n)|S_h|^{1-1/p}\Vert f \Vert_{L^p}\\
\label{eq:conclude}    &\leq C (r_1\dots r_n)^{2-1/p}\Vert f \Vert_{L^p}.
  \end{align}
Now we conclude as in \cite{MR2476419} (which is where Lemma \ref{lem:lemma-shape-sections} is used). More precisely \eqref{eq:conclude} is the 5th inequality on \cite[pg. 446]{MR2476419}, so the rest of this step is exactly as given there.

\textit{Step 2. [Proof for $g$-convex functions]} When $u$ is not strictly $g$-convex, we may consider on a small enough neighbourhood of $x_0$ the function $u+ \epsilon|x-x_0|^2$. Indeed by the proof of \cite[Theorem 2.22]{RankinThesis} this function is strictly $g$-convex on a neighbourhood of $x_0$ depending only on $g$ (in particular, independent of $u$ and $\epsilon$). Moreover it is an Aleksandrov solution of a generated Jacobian equation with right-hand side in the original  $L^p$ space. This is an consequence of the identity $Yu(x) = Y(x,u,(x),\partial u(x))$ for $\partial u(x)$ the subgradient.  Thus by the previous proof
\[ 0 \leq u(x)-g(x,y_0,z_0) \leq u(x)+\epsilon|x-x_0|^2-g(x,y_0,z_0) \leq C|x-x_0|^{1+\alpha},\]
as required. 
\end{proof}

\begin{remark}
  Loeper \cite{MR2506751} and Jeong \cite{MR4265360} proved the $C^{1,\alpha}$ regularity for Aleksandrov solutions of
  \[ \det DYu = \nu,\]
  where $\nu$ satisfies that for some $p \in (n,+\infty]$ and $C_\nu>0$ there holds
  \[ \nu(B_\epsilon(x)) \leq C_\nu\epsilon^{n((1-1/p))}.\]
  Our proof is easily adapted to this condition. In \eqref{eq:h-est} we use
  \[ \nu(S_h) \leq \nu(B_{2r_1}(x_0)) \leq Cr_1^{n((1-1/p))}, \]
  and combine with Lemma \ref{lem:lemma-shape-sections}. 
  
\end{remark}

\section{Interior Pogorelov estimate for constant right-hand side}
\label{sec:inter-pogor-estim}

Now we start work on the $C^{2,\alpha}$ estimate. Recall we will prove this by establishing a $C^2$ (i.e. uniform ellipticity) estimate when the right-hand side is Dini continuous. Then we obtain the $C^{2,\alpha}$ estimate from the elliptic theory \cite{MR2273802}. The $C^2$ estimate for Dini continuous right hand side is a perturbation of the same result for the special case of constant right-hand side, the proof of which is the goal of this section. First we introduce a strengthening of the $C^{1,\alpha}$ result and a strict $g$-convexity estimate that holds by duality.

\begin{lemma}
  Let $g$ be a generating function satisfying A3 and $u:\Omega\rightarrow \mathbf{R}$ be a $g$-convex solution of $\lambda \leq det DYu \leq \Lambda$. For each $\Omega' \subset \subset \Omega$ there is $C,d,\beta,\gamma > 0$ depending on $\lambda,\Lambda,g,\Omega',\Omega$ for which the following holds. Whenever $x_0 \in \Omega'$,  $g(\cdot,y_0,z_0)$ is the $g$-support at $x_0$ and $x \in B_d(x_0)$ there holds
    \begin{equation}
    \label{eq:scd}
       C^{-1}|x-x_0|^{1+\gamma} \leq u(x) - g(x,y_0,z_0) \leq C|x-x_0|^{1+\beta}.
  \end{equation}
\end{lemma}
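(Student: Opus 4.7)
The upper bound is essentially a restatement of Theorem \ref{thm:main1}. Since $\lambda \leq f \leq \Lambda$ places $f$ in $L^\infty$, Theorem \ref{thm:main1} applied with $p = \infty$ (so the auxiliary exponent in that theorem equals $1$) yields the H\"older exponent $\alpha = 1/(2n-1)$. Taking $\beta := 1/(2n-1)$ in the present lemma, this gives
\[ u(x) - g(x, y_0, z_0) \leq C|x - x_0|^{1+\beta} \]
for all $x \in B_d(x_0)$, uniformly for $x_0 \in \Omega'$.

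For the lower bound I plan to invoke the duality structure guaranteed by the A1$^*$ condition. Introduce the dual generating function $g^*$ (characterised, up to conventions, by $g^*(y, x, g(x, y, z)) = z$) and the $g^*$-transform
\[ u^*(y) := \sup_{x \in \Omega} g^*(y, x, u(x)). \]
Standard duality results for GJEs \cite{MR3121636, MR3639322} then give: $g^*$ satisfies A0--A3 (A3 is self-dual); $u^*$ is $g^*$-convex; $u^*$ is an Aleksandrov solution of a GJE whose right-hand side is bounded between positive constants depending only on $\lambda,\Lambda$ and derivatives of $g$; and whenever $g(\cdot, y_0, z_0)$ is the $g$-support of $u$ at $x_0$ the function $g^*(\cdot, x_0, z_0^*)$ is a $g^*$-support of $u^*$ at $y_0$, for an appropriate $z_0^*$. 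Applying the already-established upper bound to $u^*$ at $y_0$ therefore yields $u^*(y) - g^*(y, x_0, z_0^*) \leq C|y - y_0|^{1+\beta}$ for $|y - y_0|$ small.

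To translate this dual upper bound into a lower bound on $u - g(\cdot, y_0, z_0)$, I use the Legendre-type duality inequality --- the GJE analogue of the Monge--Amp\`ere identity $u(x) + u^*(y) \geq x\cdot y$, with equality exactly when $y$ is a $g$-subdifferential of $u$ at $x$. Taking the sup over admissible $y$ and computing the Legendre conjugate of the dual bound $w \mapsto C|w|^{1+\beta}$ produces a lower bound of the form $c|x - x_0|^{1 + 1/\beta}$, so that $\gamma := 1/\beta = 2n-1$ works. This is the well-known reciprocal relationship between H\"older regularity of a Legendre transform and the modulus of strict convexity of its dual.

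The main obstacle in executing this plan is formulating the duality cleanly in the GJE setting: unlike optimal transport, where the dual cost is simply $c^*(y,x)=c(x,y)$, here the $z$-variable forces a less symmetric transformation and one must carefully track how the bounds on $\det DY u$ transfer to $\det DY^* u^*$ (this is ultimately a change-of-variables computation using $Y\circ X^* = \mathrm{id}$). However, all the requisite machinery is already developed in \cite{MR3121636,MR3639322,MR4265360}, so once the duality is set up the Legendre conjugate step is essentially routine.
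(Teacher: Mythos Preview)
Your proposal is correct and follows essentially the same approach as the paper: the upper bound is the $C^{1,\alpha}$ estimate (the paper cites Guillen--Kitagawa rather than its own Theorem~\ref{thm:main1}, but either works), and the lower bound is obtained by applying $C^{1,\alpha}$ to the $g^*$-transform and inverting via a Legendre-type duality, yielding $\gamma = 1/\beta$ exactly as you state. The paper's only additional ingredient is a coordinate change from \cite{Rankin2021} that puts $g$ in the form $x\cdot y - z + \text{higher order terms}$, after which the inversion $u(x) \geq \sup_y g(x,y,\overline{C}|y|^{1+\alpha}) \geq c|x|^{1+1/\alpha}$ becomes an explicit calculus computation rather than an appeal to abstract duality machinery.
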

The right-hand inequality is the $C^{1,\alpha}$ estimate of Guillen and Kitagawa \cite{MR3639322} which follows from the strict convexity in \cite[Lemma 4.1]{MR4218157}. The left-hand side inequality follows from the right-hand side by duality. We give the proof in Appendix \ref{sec:quant-conv-via}. 

\begin{lemma}\label{lem:int-pogorelov}
    Assume that $u \in C^4(\Omega) \cap C^2(\overline{\Omega})$ is a $g$-convex solution of
    \begin{align}
      \det[D^2u-A(\cdot,u,Du)] = f_0 \text{ in }\Omega, \nonumber\\
      u = g(\cdot,y,z-h) \text{ on }\partial \Omega,
    \end{align}
where $g(\cdot,y,z)$ is a support at some $\overline{x} \in \Omega$ and $f_0 > 0$ is a real number. We assume $g$ satisfies A3w, $h>0$ is sufficiently small (determined in the proof), and $C_0$ is the good shape constant of $\Omega$.  
For each $\tau \in (0,1)$ there is  $C > 0$ depending on $\Vert A \Vert_{C^2},\Vert u \Vert_{C^1(\Omega)},\tau,C_0,h$ such that in
\[ S_{\tau h} := \{ x ; u(x) < g(x,y,z-\tau h)\},\] we have
  \begin{equation}
    \label{eq:r2:pogloc}
       \sup_{S_{\tau h}}|D^2u| \leq C.
  \end{equation}
\end{lemma}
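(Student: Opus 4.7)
The plan is to apply a Pogorelov-style maximum-principle argument to a suitable auxiliary function, following the now-classical approach from Monge--Amp\`ere \cite{MR1038360} and optimal transport \cite{MR2748621} adapted to the GJE setting. Writing $w_{ij} := u_{ij} - A_{ij}(x,u,Du)$, the matrix $w$ is positive definite by $g$-convexity and satisfies $\log\det w = \log f_0$; let $L := w^{ij}\partial_{ij}$ denote the associated linearized operator. Introduce
\[ \eta(x) := g(x,y,z-h) - u(x),\]
which is positive on $\Omega$ by $g$-convexity and vanishes on $\partial \Omega$ by the boundary condition, with $\eta \ge c(1-\tau)h$ on $S_{\tau h}$ for $c$ depending only on $\inf|g_z|$. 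Hence any bound of the form $\eta^2 w_{\xi\xi} \le C$ on $\Omega$ yields $w_{\xi\xi} \le C(\tau,h)$ on $S_{\tau h}$, and combined with $\det w = f_0$ furnishes the two-sided bound on $D^2u$ claimed in \eqref{eq:r2:pogloc}.

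For a constant $\beta > 0$ to be determined, I would consider the Pogorelov-style function
\[ W(x,\xi) := \log w_{\xi\xi}(x) + 2\log \eta(x) + \tfrac{\beta}{2}|Du(x)|^2, \qquad x\in \Omega,\ \xi\in S^{n-1}. \]
Since $W \to -\infty$ on $\partial \Omega$, its supremum over $\overline{\Omega}\times S^{n-1}$ is attained at some interior $(x^\ast,\xi_0)$; after rotating coordinates I may assume $\xi_0 = e_1$ and $w_{ij}(x^\ast)$ diagonal. The first-order conditions $D_k W(x^\ast) = 0$ and the second-order inequality $LW(x^\ast) \le 0$ follow.

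The central computation is $LW(x^\ast)$. Differentiating $\log \det w = \log f_0$ once and twice in $e_1$ gives $w^{ij}D_1 w_{ij} = 0$ and $w^{ij} D_{11} w_{ij} = w^{ik}w^{jl}(D_1 w_{ij})(D_1 w_{kl}) \ge 0$. Expanding $w_{11} = u_{11} - A_{11}(x,u,Du)$ and invoking equality of mixed partials produces an expression for $Lw_{11}$ consisting of (i) third-derivative quadratic terms from $L\log \det w$; (ii) the commutator $w^{ij}(D_{11}A_{ij} - D_{ij} A_{11})$, which reduces, after the first-order conditions and the rotation, to a cubic gradient term involving $A_{ij,p_kp_l}$; and (iii) lower-order terms bounded by $\|A\|_{C^2}$ and $\|u\|_{C^1}$. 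Using the first-order conditions $D_k W(x^\ast) = 0$, the third-derivative terms from $L\log w_{11}$ and $L\log \eta$ cancel up to terms absorbable by $\beta L(|Du|^2/2)$ for $\beta$ large.

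The main obstacle is handling the cubic gradient contribution from (ii), which is where A3w enters. After the first-order cancellations this term takes the form $w^{ij} A_{ij,p_kp_l}\zeta_k\zeta_l$ for a specific pair of vectors arising from the condition $D_k W(x^\ast) = 0$; the key observation, standard in the GJE Pogorelov literature, is that these vectors can be arranged in the rotated frame so that the A3w inequality forces the term to be nonnegative up to an error absorbed into the other terms for $\beta$ chosen large enough in terms of $\|A\|_{C^2}$ and $\|u\|_{C^1}$. Combining everything, $LW(x^\ast) \le 0$ becomes $w_{11}(x^\ast)\eta^2(x^\ast) \le C$ with $C$ depending on the quantities listed in the statement. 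Since $x^\ast$ is the global maximum of $W$, this bound propagates to all of $\overline{\Omega}\times S^{n-1}$, and the lower bound $\eta \ge c(1-\tau)h$ on $S_{\tau h}$, combined with the equation $\det w = f_0$, yields \eqref{eq:r2:pogloc}.
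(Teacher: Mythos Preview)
Your overall strategy --- a Pogorelov maximum principle applied to a test function built from $\log w_{\xi\xi}$, the barrier $\eta = g(\cdot,y,z-h) - u$, and $|Du|^2$ --- matches the paper. But the specific test function $W = \log w_{\xi\xi} + 2\log\eta + \tfrac{\beta}{2}|Du|^2$ has two defects that prevent the argument from closing. First, you are missing the quadratic barrier $\kappa\phi$ with $\phi=|x-\overline{x}|^2$: the paper uses $v = \kappa\phi + \tau|Du|^2/2 + \log w_{\xi\xi} + \beta\log\eta$. In the GJE setting, unlike pure Monge--Amp\`ere, $\eta$ is not an affine function minus $u$; computing $L\eta$ produces terms from $A_{ij}(\cdot,u_0,Du_0) - A_{ij}(\cdot,u,Du)$ which, together with the $A$-dependence in $L(|Du|^2)$ and $L\log w_{11}$, leave uncontrolled $-Cw^{ii}$ contributions. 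Nothing in your $W$ supplies a compensating $+w^{ii}$: in particular $\beta L(|Du|^2/2) \geq \beta w_{ii} - C\beta(1+w^{ii})$ makes the $w^{ii}$ problem worse, not better, so your claim that the errors are ``absorbable by $\beta L(|Du|^2/2)$ for $\beta$ large'' is incorrect. The paper's $L\phi \geq w^{ii} - C$ (valid once $h$, hence $\mathrm{diam}(\Omega)$, is small --- this is why the statement requires $h$ small) is exactly what absorbs these, with $\kappa$ chosen large last.

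Second, the coefficient on $\log\eta$ cannot be fixed at $2$; the paper keeps it as a free parameter $\beta$ and needs $\beta$ large. After the careful third-derivative bookkeeping in $L\log w_{11}$ one is left with a good term $\tfrac{1}{2w_{11}^2}\sum_{i\geq 2}w^{ii}w_{11,i}^2$, which must absorb the bad piece $-\beta\sum_{i\geq 2}w^{ii}(D_i\eta/\eta)^2$ coming from $\beta L\log\eta$. Via the first-order condition $D_i\eta/\eta = -\beta^{-1}[w_{11,i}/w_{11} + \cdots]$ the bad piece carries a factor $C/\beta$ for a structural constant $C$, and one needs $\beta \geq 2C$; with your coefficient fixed at $2$ this fails whenever $C>1$. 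Two smaller points: the natural linearized operator is $Lv = w^{ij}(D_{ij}v - A_{ij,p_k}D_kv)$, since the drift cancels the first-order $A_{ij,p_k}D_k\eta$ contribution in $L\eta$ and isolates the second-order term $A_{ij,p_kp_l}D_k\eta D_l\eta$ to which A3w applies; and your account of the A3w step is too vague --- it is invoked twice, once with $(\xi,\eta)=(e_i,e_1)$ in estimating $Lw_{11}$ and once via a splitting of $D\eta$ in $L\eta$, each producing errors that are absorbed only because $\kappa$ and $\beta$ are free.
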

\begin{proof}
This is essentially the estimate
  \begin{equation}
    \label{eq:init-est}
     (g(\cdot,y,z-h)-u(\cdot))^\beta|D^2u(\cdot)| \leq C,
  \end{equation}
  which was given in \cite{MR4218157}. Since we need to ensure the constant only depends on $\Vert A \Vert_{C^2}$ we will provide full details. The proof is via a Pogorelov type estimate: We consider a certain test function which attains a maximum in $\Omega$. Our choice of test function ensures it both controls the second derivatives and is controlled at its maximum point.

  We let $\phi := |x-\overline{x}|^2$ and introduce both the function
\[ v(x,\xi) = \kappa\phi+\tau|Du|^2/2 + \log(w_{\xi\xi}) + \beta \log[g(\cdot,y,z -h)-u],\]
  and the differential operator\footnote{Here differentiation is with respect to $x$, never $\xi$.}
  \begin{equation}
  \label{eq:l-def}
  L(v) := w^{ij}[D_{ij}v-D_{p_k}A_{ij}D_kv],
\end{equation}
where $w_{ij} = u_{ij}-A_{ij}(x,u,Du)$ and $D_{p_k}A_{ij}$ is evaluated at $(x,u,Du)$. 

We use the notation $u_0 = g(\cdot,y,z-h)$ and $\eta = u_0-u$. Because the nonnegative function $e^v$ is $0$ on $\partial \Omega$,  $v$ attains an interior maximum at $x_0 \in \Omega$ and some $\xi$ assumed without loss of generality to be $e_1$. We also assume, again without loss of generality, that at $x_0$ $w$ is diagonal.  At an interior maximum $Dv=0$ and $D^2v \leq 0$ so that
  \begin{equation}
    \label{eq:p:globmain}
       0 \geq L v  = \kappa L\phi + \tau L(|Du|^2/2) + L(\log(w_{11})) + \beta L \log \eta.
  \end{equation}
  We will compute  each term in \eqref{eq:p:globmain} and from this obtain \eqref{eq:init-est}.

  \textit{Term 1: $L\phi$.} This one's immediate -- provided the domain (and subsequently $|D\phi|$) is chosen sufficiently small depending on $|A_{ij,p}|$ we have
  \begin{equation}
    \label{eq:p:globest1}
     L \phi \geq w^{ii}-C.
  \end{equation}

  \noindent \textit{Term 2: $L(|Du|^2)$}
  We compute
  \begin{align}
 \label{eq:p:2terms}  L(|Du|^2/2)  &= w^{ii}u_{ki}u_{ki} + u_k[w^{ii}(u_{kii}- D_{p_l}A_{ii}u_{lk})].
  \end{align}
  We note (using $u_{ij} = w_{ij}+A_{ij}$) that
  \begin{equation}
   w^{ii}u_{ki}u_{ki} = w^{ii}(w_{ki}+A_{ki})(w_{ki}+A_{ki}) \geq w_{ii}-C(1+w^{ii}),\label{eq:r2:wuu}
 \end{equation}
  and by differentiating the PDE in the direction $e_k$ at $x_0$
  \begin{align}
 \label{eq:r2:diff-once}   w^{ij}[u_{ijk}-A_{ij,p_l}u_{lk}] = w^{ij}(A_{ij,k}+A_{ij,u}u_k).
  \end{align}
  Hence \eqref{eq:p:2terms} becomes
  \begin{equation}
  L(|Du|^2/2) \geq w_{ii}-C(1+w^{ii}).\label{eq:p:globest2}
\end{equation}

\noindent \textit{Term 3: $L(\log(w_{11}))$.} To begin, we differentiate the PDE twice in the $e_1$ direction and obtain, with the notation $w_{ij,k} := D_{x_k}w_{ij}$, that
\begin{align}
  \nonumber w^{ii}[u_{ii11}-D_{p_k}A_{ii}u_{k11}]&= w^{ii}w^{jj}w_{ij,1}^2 + w^{ii}\big[A_{ii,11} - 2A_{ii,1u}u_1\\
        \nonumber          +2A_{ii,1p_k}u_{k1}+A_{ii,uu}&u_1^2+A_{ii,u}u_{11}+2A_{ii,p_k}u_1u_{1k}+A_{ii,p_kp_l}u_{1k}u_{1l} \big]\\
        \geq w^{ii}w^{jj}w_{ij,1}^2&+w^{ii}A_{ii,p_kp_l}u_{1k}u_{1l} - C(w_{ii}+w^{ii}+w_{ii}w^{ii}). \label{eq:p:1}
\end{align}
We use A3w to deal with the second term. Use $u_{ij} = w_{ij}+A_{ij}$ to write
\[ w^{ii}A_{ii,p_kp_l}u_{1k}u_{1l} \geq w^{ii}A_{ii,p_1p_1}w_{11}^2 - C(w^{ii}+w^{ii}w_{ii}).\]
Then by applying A3w with $\xi=e_i,\ \eta=e_j$ for $i \neq j$ we see $A_{ii,p_jp_j} \geq 0$ so that
\begin{align*}
  \label{eq:12}
  w^{ii}A_{ii,p_1p_1}w_{11}^2 &= w^{11}A_{11,p_1p_1}w_{11}^2 + \sum_{i=2}^nw^{ii}A_{ii,p_1p_1}w_{11}^2  \\
  &\geq -Cw_{11}.
\end{align*}
Thus \eqref{eq:p:1} becomes
\[ L(u_{11}) \geq w^{ii}w^{jj}w_{ij,1}^2- C(w_{ii}+w^{ii}+w_{ii}w^{ii}). \]

We perform similar calculations for $LA_{11}$ to obtain 
\begin{equation}
  \label{eq:p:lw11}
   L(w_{11}) \geq w^{ii}w^{jj}w_{ij,1}^2- C(w_{ii}+w^{ii}+w_{ii}w^{ii}).
\end{equation}
We use this to compute $L(\log w_{11})$ as follows. First,
\[ L(\log w_{11})  = -\frac{w^{ii}w_{11,i}^2}{w_{11}^2} + \frac{L(w_{11})}{w_{11}}.\]
Hence by \eqref{eq:p:lw11}
\begin{equation}
L(\log w_{11}) \geq \frac{w^{ii}w^{jj}w_{ij,1}^2}{w_{11}}-\frac{w^{ii}w_{11,i}^2}{w_{11}^2} - \frac{C}{w_{11}}(w_{ii}+w^{ii}+w_{ii}w^{ii}).\label{eq:p:toreturn}
\end{equation}
When $i,j = 1$ in the first term and $i=1$ in the second these terms cancel. At the expense of an inequality we discard terms with neither $i$ nor $j = 1$. Thus
\begin{align}
  \label{eq:r2:pog1}   \frac{w^{ii}w^{jj}w_{ij,1}^2}{w_{11}}&-\frac{w^{ii}w_{11,i}^2}{w_{11}^2}\\
\nonumber  &\geq \sum_{i>1}\frac{w^{ii}w_{i1,1}^2}{w_{11}^2} + \sum_{j>1}\frac{w^{jj}w_{1j,1}^2}{w_{11}^2} - \sum_{i>1}\frac{w^{ii}w_{11,i}^2}{w_{11}^2}\\
 % \nonumber   &=\sum_{i>1}\left[ 2 \frac{w^{ii}w_{i1,1}^2}{w_{11}^2} - \frac{w^{ii}w_{11,i}^2}{w_{11}^2}\right]\\
  \nonumber  &=\frac{1}{w_{11}^2}\sum_{i>1}w^{ii}\big[2w_{i1,1}^2-w_{11,i}^2\big]\\
  \nonumber  &= \frac{1}{w_{11}^2}\sum_{i>1}w^{ii}w_{11,i}^2 + \frac{2}{w_{11}^2}\sum_{i>1}w^{ii}[w_{i1,1}^2 - w_{11,i}^2]\\
  \nonumber         &= \frac{1}{w_{11}^2} \sum_{i>1}w^{ii}w_{11,i}^2\\
\nonumber  &\quad\quad+ \frac{2}{w_{11}^2}\sum_{i>1}w^{ii}(w_{i1,1}-w_{11,i})(w_{i1,1}+w_{11,i}).
\end{align}
Rewriting the second sum in terms of the $A$ matrix yields
\begin{align*}
  &\frac{w^{ii}w^{jj}w_{ij,1}^2}{w_{11}}-\frac{w^{ii}w_{11,i}^2}{w_{11}^2}= \frac{1}{w_{11}^2} \sum_{i>1}w^{ii}w_{11,i}^2\\
  &\quad\quad+ \frac{2}{w_{11}^2}\sum_{i>1}w^{ii}(D_iA_{11}-D_{1}A_{i1})(2w_{11,i}+D_iA_{11}-D_1A_{i1})\\
    &= \frac{1}{w_{11}^2} \sum_{i>1}w^{ii}\big[w_{11,i}^2 + 4w_{11,i}(D_iA_{11}-D_{1}A_{i1}) + 2(D_iA_{11}-D_{1}A_{i1})^2\big].
\end{align*}
 Now, Cauchy's inequality implies
\begin{align*}
  4w_{11,i}(D_iA_{11}-D_{1}A_{i1}) \geq -\frac{w_{11,i}^2}{2} - 8 (D_iA_{11}-D_{1}A_{i1})^2.
\end{align*}
Thus
\[  \frac{w^{ii}w^{jj}w_{ij,1}^2}{w_{11}}-\frac{w^{ii}w_{11,i}^2}{w_{11}^2} \geq \frac{1}{2w_{11}^2}\sum_{i=2}^nw^{ii}w_{11,i}^2 - Cw^{ii},\]
and on returning to \eqref{eq:p:toreturn}
\begin{equation}
  \label{eq:p:globest3}
  L(\log(w_{11})) \geq \frac{1}{2w_{11}^2}\sum_{i=2}^nw^{ii}w_{11,i}^2 - C(1+w_{ii}+w^{ii}).   
\end{equation}
Now, using \eqref{eq:p:globest1},\eqref{eq:p:globest2}, and \eqref{eq:p:globest3} in  \eqref{eq:p:globmain} implies
\begin{align}
  \label{eq:r2:pogloc1} 0 &\geq \kappa(w^{ii}-C) + \tau [w_{ii}-C(1+w^{ii})] +  \frac{1}{2w_{11}^2}\sum_{i=2}^nw^{ii}w_{11,i}^2 \\
\nonumber  & \quad\quad - C(1+w_{ii}+w^{ii})+ \beta L(\log \eta).
\end{align}

\noindent \textit{Term 3: $L(\log \eta)$.} First, write
\begin{equation}
    \label{eq:r2:pogloc2} L(\log \eta) = \frac{L\eta}{\eta} - \sum_{i=1}^nw^{ii}\left(\frac{D_i\eta}{\eta}\right)^2. 
\end{equation}
We compute (using $D_{ii}u_0 = A_{ii}(\cdot,u_0,Du_0)$)
\begin{align}
\nonumber  L\eta = w^{ii}&[D_{ii}u_0 - D_{ii}u - D_{p_k}A_{ii}(\cdot,u,Du)D_k\eta] \\
 \nonumber    =w^{ii}&[-w_{ii}+A_{ii}(\cdot,u_0,Du_0)- A_{ii}(\cdot,u,Du)- D_{p_k}A_{ii}(\cdot,u,Du)D_k\eta] \\
  \nonumber   \geq w^{ii}&[A_{ii,u}\eta+A_{ii}(\cdot,u,Du_0)- A_{ii}(\cdot,u,Du)- D_{p_k}A_{ii}(\cdot,u,Du)D_k\eta] - C\\
  &\geq w^{ii}D_{p_kp_l}A_{ii}D_k\eta D_l \eta - C - Cw^{ii}\eta. \label{eq:r2:pogloc3}
\end{align}
For each $i$ write
\begin{align*}
  D_{p_kp_l}A_{ii}D_k\eta D_l \eta &= \sum_{k,l \neq i}D_{p_kp_l}A_{ii}D_k\eta D_l \eta + 2\sum_{l \neq i}D_{p_ip_l}A_{ii}D_i\eta D_l \eta \\
  &\quad\quad+ D_{p_ip_i}A_{ii}D_i\eta D_i \eta
\end{align*}
Then by A3w the first term is nonnegative, so that
\begin{align*}
D_{p_kp_l}A_{ii}D_k\eta D_l \eta  &\geq -C D_i\eta - C (D_i\eta)^2.
\end{align*}
Returning to \eqref{eq:r2:pogloc3} we see
\[  L\eta \geq -C(1+w^{ii}\eta) - Cw^{ii}D_i\eta - Cw^{ii}(D_i\eta)^2.\]
Which into \eqref{eq:r2:pogloc2} implies
\begin{equation}
  \label{eq:r2:pogloc4}
   L(\log \eta) \geq -\frac{C}{\eta} - Cw^{ii} - C \sum_{i=1}^nw^{ii}\left(\frac{D_i\eta}{\eta}\right)^2.
\end{equation}
Here we've used that we can assume $\eta < 1$, and also used Cauchy's to note
\[ w^{ii}\frac{D_i\eta}{\eta} = \sqrt{w^{ii}}\sqrt{w^{ii}}\frac{D_i\eta}{\eta} \leq w^{ii}+w^{ii}\left(\frac{D_i\eta}{\eta}\right)^2. \]
Now we deal with the final term in \eqref{eq:r2:pogloc4}. We can assume that  $w^{11}(D_1\eta/\eta)^2 \leq 1$, for if not we have \eqref{eq:r2:pogloc} with $\beta = 2$. Since we are at a maximum $D_iv = 0$, that is
\[ \frac{D_i\eta}{\eta} = -\frac{1}{\beta}\left[\frac{w_{11,i}}{w_{11}}+\kappa D_i\phi + \tau D_k u w_{ik} + \tau D_k u A_{ik}\right].\]
This implies
\[ \sum_{i=2}^nw^{ii}\left(\frac{D_i\eta}{\eta}\right)^2 \leq C\left[\sum_{i=2}^n\frac{w^{ii}w_{11,i}^2}{w_{11}^2\beta^2} + \frac{\kappa^2}{\beta^2}w^{ii}|D_i\phi|^2 + \frac{\tau^2}{\beta^2}(w_{ii}+w^{ii})\right].\]
Choosing $\beta \geq 1,2C$ and returning to \eqref{eq:r2:pogloc4} we obtain
\[ \beta L(\log \eta) \geq \frac{-C\beta}{\eta}-\kappa^2|D\phi|^2 w^{ii} - \frac{C\tau^2}{\beta^2}[w^{ii}+w_{ii}] - C\beta w^{ii} - \frac{1}{2w_{11}^2}\sum_{i=2}^nw^{ii}w_{11,i}^2.\]

Substituting into \eqref{eq:r2:pogloc1} completes the proof: 
\begin{align*}
  0 &\geq \kappa (w^{ii}-C)+\tau[w_{ii}-C(1+w^{ii})] - C(1+w_{ii}+w^{ii}) -\frac{C\beta}{\eta} \\
    &\quad\quad-C \kappa^2|D\phi|^2 w^{ii} - C\frac{\tau^2}{\beta}[w^{ii}+w_{ii}] - C\beta w^{ii}\\
    & = w^{ii}[\kappa - \tau C -C-C\kappa^2|D\phi|^2 -\frac{C\tau^2}{\beta} - C\beta]\\
  &\quad\quad+w_{ii}[\tau - C - \frac{C\tau^2}{\beta}] - C[\kappa+\tau+\frac{\beta}{\eta}].
\end{align*}
Take $\text{diam}(\Omega)$, and subsequently $|D\phi|$, small enough to ensure $\kappa^2|D\phi|^2 \leq 1$ (our choice of $\kappa$ will only depend on allowed quantities). A further choice of $\beta \geq \tau^2$, $\tau$ large depending only on $C$, and finally $\kappa$ large depending on $\tau,C,\beta$ implies
\[ 0 \geq w^{ii}+w_{ii}-\frac{C}{\eta}.\]
This implies $\eta w_{ii} \leq C$ at the maximum point, and the proof is complete. 
\end{proof}

\section{$C^{2,\alpha}$ regularity}
\label{sec:c2-alpha-regularity}

In this section we will prove the $C^{2,\alpha}$ estimate via a $C^2$ estimate. We adapt the method of proof used by Liu, Trudinger, and Wang in the optimal transport case \cite{MR2748621}. We also use some details from Figalli's exposition in the Monge--Amp\`ere case \cite[Section 4.10]{MR3617963}.  Here's how we obtain the $C^2$ estimate. When the right hand side is constant the $C^2$ estimate is true by the interior Pogorelov's estimate of the previous section. Then the argument for Dini-continuous $f$ is to perturb the argument for constant $f$. That is we zoom in, treating a series of normalized approximating problems with constant right hand side.

\subsection{Normalization of sections}
\label{sec:norm-sect}
Here we explain the procedure for normalizing a solution on a section. We assume we are given a strictly $g$-convex function $u:\Omega \rightarrow \mathbf{R}$ which is an Aleksandrov solution of
\begin{align*}
  \det DYu = f,
\end{align*}
as well as a point $x_0$ and corresponding $g$-support $g(\cdot,y_0,z_0)$. As usual, we consider the section $S_h(x_0)$. The definition of Aleksandrov solution is coordinate independent so we may assume we are in the coordinates given by \eqref{eq:x-trans} and $S_h$ is convex. Assume the minimum ellipsoid is given by \eqref{eq:min-ellipsoid} and $T$ is given by \eqref{eq:t-def}.  We want to consider the PDE solved by
\begin{equation}
  \label{eq:v-def}
   v(\overline{x}) = \frac{1}{h}[u(T^{-1}\overline{x}) - g(T^{-1}\overline{x},y_0,z_0-h)].
\end{equation}
on $U := TS_h$. Importantly $B_{1/n} \subset U \subset B_1$, $v|_{\partial U} = 0$, and $C^{-1} \leq |\inf_{U} v| \leq C $ for $C$ depending only on $g_z$. Thus this is a natural generalization of the normalization procedure for Monge--Amp\`ere equations. We show that $v$ solves a MATE
  \begin{align}\label{eq:v-eq}
       \det [D^2v - \overline{A}(\cdot,v,Dv)] = \overline{B}(\cdot,v,Dv),
  \end{align}
  for $\overline{A},\overline{B}$ satisfying $\Vert \overline{A} \Vert_{C^2} \leq C\Vert A \Vert_{C^2}$ and $C^{-1} B \leq \overline{B} \leq C B$ for $C$ depending only on $g$. When $v$ is defined by \eqref{eq:v-def} we use the notation
\begin{align*}
  \tilde{A}_{ij}(\overline{x},v,Dv) &= \frac{r_ir_j}{h}A_{ij}\left(T^{-1}\overline{x},u,Du\right), \\
 Yv &= Y\left(T^{-1}x,u,Du\right)
\end{align*}
and similarly for $Zv$. We compute directly $v$ solves the equation \eqref{eq:v-eq} for 
\begin{align*}
  \overline{A}_{ij}(\overline{x},v,Dv) &= \frac{r_ir_j}h\left[g_{ij}(T^{-1}\overline{x}, Yv, Zv) - g_{ij}(T^{-1}\overline{x}, y_0, z_0 - h)\right]\\
                                       &= \frac{r_ir_j}h\left[g_{ij}(T^{-1}\overline{x}, Yv, Zv) - g_{ij}(T^{-1}\overline{x}, y_0, z_0 ) + O(h)\right]\\
                                       &= \tilde{A}_{ij}(\overline{x},v(\overline{x}),Dv(\overline{x})) - \tilde{A}_{ij}(\overline{x},v(\overline{x}_0),Dv(\overline{x}_0)) + O(h)\\
  &= \tilde{A}_{ij}(\overline{x},v(\overline{x}),Dv(\overline{x})) - \tilde{A}_{ij}(\overline{x},v(\overline{x}),Dv(\overline{x}_0)) + O(h). 
\end{align*}
In this form we can argue exactly as in \cite[pg. 440]{MR2476419} to obtain
\[   \overline{A}_{ij}(\overline{x},v,Dv) =\frac{hr_ir_j}{r_kr_l}A_{ij,p_kp_l}(T^{-1}\overline{x},hv,\frac{h}{r_i}v_i)v_kv_l + O(h). \]
Then $\overline{A}$ is bounded by Lemma \ref{lem:lemma-shape-sections} when A3 is satisfied. For the $C^2$ estimates for $\overline{A}$ note $\frac{h}{r_i}$ is bounded by the strict convexity and differentiability estimate \eqref{eq:scd}. Finally the pinching estimate on $B$ follows from $\overline{B}(\cdot,v,Dv) = h^n( \det T)^2 B(\cdot,u,Du)$, $(\det T)^{-1} = r_1 \dots r_n$, and \eqref{eq:sec-ellips}.

\subsection{Lemmas for the $C^{2,\alpha}$ estimate}
\label{sec:c2-estim-const}

Here we show we can estimate the good shape constant of sections by the second derivatives of the function and vice versa. A subtlety is that sections of different height are convex in different coordinates. We assume at the outset some initial coordinates are fixed. We will say a section has good shape constant $C$ if after performing the change of variables \eqref{eq:x-trans} the section has good shape constant $C$. Note because the Jacobian of the transformation \eqref{eq:x-trans} is $E$ (from A2) if $S_h$ has good shape constant $C$ then it is still true, in the initial coordinates with respect to which $S_h$ may not be constant, that $S_h$ contains a ball of radius $r$ and is contained in a ball of radius $R$ for $R/r \leq C \frac{|\Lambda|}{|\lambda|}$ where $\Lambda,\lambda$ are respectively the minimum and maximum eigenvalues of $E$ over $\overline{\Gamma}$.

\begin{lemma}\label{lem:deriv-imp-shape}
Let $u$ be a strictly $g$-convex solution of
  \begin{align}
    \det [D^2u - A(\cdot,u,Du)] = f \text{ in }\Omega.
  \end{align}
  Fix $x_0 \in \Omega$ and a support $g(\cdot,y_0,z_0)$ at $x_0$. Assume $S_h(x_0) \subset \subset \Omega$, and $|D^2u(x_0)| \leq M$ for some $M$. Then $S_h(x_0)$ has a good shape constant which depends only on $M,h,f,g$ and the constant in the Pogorelov Lemma \ref{lem:int-pogorelov}. 
\end{lemma}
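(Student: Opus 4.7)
My plan combines two observations: a crude but direct shape bound from Lemma~\ref{lem:lemma-shape-sections}, and a refinement for small $h$ based on Taylor expansion at $x_0$. The crude bound $r_1/r_n \le \sqrt{C(g)/h}$ follows immediately from Lemma~\ref{lem:lemma-shape-sections} and already produces a good shape constant depending only on $g$ and $h$, which technically satisfies the conclusion as stated; the hypothesis $|D^2u(x_0)|\le M$ plays no role in this crude version.

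For an estimate uniform as $h\to 0$---the form presumably intended for use in the perturbation scheme that follows---I would argue by Taylor expansion. The function $u-g(\cdot,y_0,z_0)$ vanishes together with its gradient at $x_0$ and has Hessian $w(x_0):=D^2u(x_0)-A(x_0,u(x_0),Du(x_0))$ there, so
\[ u(x)-g(x,y_0,z_0) = \tfrac12\, w_{ij}(x_0)(x-x_0)_i(x-x_0)_j + R(x). \]
The hypothesis $|D^2u(x_0)|\le M$ bounds every eigenvalue of $w(x_0)$ above by $M+\Vert A\Vert_{C^0}$, and combined with $\det w(x_0) = f(x_0) \ge \lambda > 0$ (using the pinching on $f$ from the $C^{2}$--estimate context), the smallest eigenvalue is at least $\lambda/(M+\Vert A\Vert_{C^0})^{n-1}$. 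Thus $c_0 I \le w(x_0) \le C_0 I$ for constants $c_0,C_0$ depending on $M,f,g$ only, and the level sets of the quadratic part are ellipsoids of ratio at most $\sqrt{C_0/c_0}$.

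The Pogorelov constant enters through control of the remainder $R(x)$. Applying Lemma~\ref{lem:int-pogorelov} to $S_h(x_0)$---whose shape constant is initially controlled by Lemma~\ref{lem:lemma-shape-sections}---gives $|D^2 u|\le C$ on a sub-section $S_{\tau h}(x_0)$. Differentiating the PDE once and invoking interior estimates for the resulting linearised equation then yields a quantitative modulus of continuity for $D^2 u$ at $x_0$, so that $R(x)=o(|x-x_0|^2)$ with an explicit rate. For $h'$ below a threshold $h^\ast$ depending on $M,h,f,g$ and the Pogorelov constant, the quadratic term dominates and one has the sandwich inclusion
\[ \{x: \tfrac12 w_{ij}(x_0)(x-x_0)_i(x-x_0)_j < \tfrac{h'}{2}\} \subset S_{h'}(x_0) \subset \{x: \tfrac12 w_{ij}(x_0)(x-x_0)_i(x-x_0)_j < 2h'\}, \]
yielding shape constant at most $\sqrt{C_0/c_0}$. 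For $h$ above $h^\ast$ the crude bound from Lemma~\ref{lem:lemma-shape-sections} (namely $\sqrt{C(g)/h^\ast}$) applies.

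\textbf{Main obstacle.} The principal technical point is extracting a quantitative modulus of continuity for $D^2 u$ from Pogorelov's $C^0$ bound on $D^2 u$ using only the listed data. One must differentiate the PDE and apply standard interior estimates to the linearisation while ensuring no additional quantities---for instance higher seminorms of $u$---creep into the dependency list.
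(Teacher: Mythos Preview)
Your crude bound via Lemma~\ref{lem:lemma-shape-sections} is correct and, as you note, already proves the lemma as literally stated. Your refined Taylor-expansion route, however, is not the paper's argument, and it is substantially more laborious than necessary; it also has a circularity hazard you flag yourself (the threshold $h^\ast$ depends on the Pogorelov constant for $S_h$, which through the crude shape bound depends on $h$, so the case split ``$h<h^\ast(h)$ versus $h\ge h^\ast(h)$'' is not obviously well-posed).

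The paper avoids all of this with a one-line linear-algebra comparison. It applies the normalization of Section~\ref{sec:norm-sect}: with $T=\mathrm{diag}(1/r_1,\dots,1/r_n)$ the normalized solution $v$ lives on $U=TS_h$ with $B_{1/n}\subset U\subset B_1$, so Pogorelov (Lemma~\ref{lem:int-pogorelov}) applied to $v$ on this unit-scale domain gives $cI\le \overline{w}(Tx_0)\le CI$ for $\overline{w}=D^2v-\overline{A}$, with constants independent of the original shape of $S_h$. On the other hand, from $|D^2u(x_0)|\le M$ and the PDE one has $c'I\le w(x_0)\le C'I$ at the single point $x_0$. The two are linked by the exact identity
\[
\tfrac{1}{h}\,w(x_0)\;=\;T^{T}\,\overline{w}(Tx_0)\,T,
\]
from which one reads off $\Vert T\Vert,\Vert T^{-1}\Vert\le C$ and hence $r_1/r_n\le C^2$. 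No remainder control, no modulus of continuity for $D^2u$, and no case distinction are needed: the normalization builds a good-shape domain by fiat, Pogorelov is applied there (so its constant is uniform), and the comparison of Hessians through $T$ transfers this back to a shape bound on $S_h$. Your Taylor-expansion programme would need Evans--Krylov on top of Pogorelov to control $R(x)$; the paper's argument needs Pogorelov only, evaluated at a single point.
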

\begin{proof}
  We normalize the section $S_h$ and solution as in Section \ref{sec:norm-sect} and let the normalized solution be denoted by $v$. By the Pogorelov interior estimates we have
  \[cI \leq \overline{w}(Tx_0) \leq CI,\]
  for $\overline{w} = D^2v - \overline{A}(x,v,Dv)$. 
Similarly by the PDE and $|D^2u(x_0)| \leq M$ we have $cI \leq w(x_0) \leq CI$ for $w(x_0) = D^2u(x_0) - A(x_0,u(x_0),Du(x_0))$. 
  Using in addition
  \[ \frac{1}{h}w(x_0) = T^T \overline{w}(Tx_0) T,\]
  we obtain (see \cite[Eq. 4.26]{MR3617963}) a bi-Lipschitz estimate $\Vert T \Vert, \Vert T^{-1} \Vert \leq C$. Since we can assume $T$ is of the form \eqref{eq:t-def} we obtain $r_1/r_n \leq C^2$, the desired good shape estimate on $S_h(x_0)$. 
\end{proof}

\begin{lemma}\label{lem:shape-imp-deriv}
  Assume $u \in C^3(\Omega)$ is a strictly $g$-convex  solution of 
  \[ \det[D^2u - A(\cdot,u,Du)] = f \text{ in }\Omega.\]
 Fix $x_0$ and assume $g(\cdot,y_0,z_0)$ is a $g$-support at $x_0$. If there is a sequence $h_k \rightarrow 0$ such that each
  \[ S_{h_k}:= \{ u < g(\cdot,y_0,z_0-h_k)\},\]
  has a good shape constant less than some $C_0$ then $|D^2u(x_0)| \leq C$ for $C$ depending on $C_0,\sup|f|$ and $g$. 
\end{lemma}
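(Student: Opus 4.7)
The strategy is to invert Lemma \ref{lem:deriv-imp-shape} by applying the interior Pogorelov estimate after normalizing each section $S_{h_k}(x_0)$ to a set of uniformly good shape. The key observation is that good shape of each section together with \eqref{eq:sec-ellips} forces all the semi-axes of its minimum ellipsoid to be comparable to $h_k^{1/2}$, which is exactly what is needed for the normalization procedure of Section \ref{sec:norm-sect} to produce bounds that are uniform in $k$.

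Concretely, for each $k$ pass to the $\tilde x$ coordinates \eqref{eq:x-trans} in which $S_{h_k}$ is convex, let $r_1^{(k)} \geq \dots \geq r_n^{(k)}$ be the semi-axes of its minimum ellipsoid \eqref{eq:min-ellipsoid}, take $T_k$ from \eqref{eq:t-def}, and set
\[ v_k(\overline x) = \frac{1}{h_k}\left[u(T_k^{-1}\overline x) - g(T_k^{-1}\overline x, y_0, z_0 - h_k)\right] \quad\text{on}\quad U_k := T_k S_{h_k}. \]
From $r_1^{(k)}/r_n^{(k)} \leq C_0$ and \eqref{eq:sec-ellips} one gets $r_i^{(k)} \sim h_k^{1/2}$ for every $i$; by the computations of Section \ref{sec:norm-sect} (and \eqref{eq:scd} for the $C^2$ bound on $\overline A_k$), $v_k$ then solves a MATE with $\|\overline A_k\|_{C^2} \leq C$, right-hand side $\overline B_k$ pinched between positive constants, and induced generating function $\overline g_k$ satisfying A3w; moreover $B_{1/n} \subset U_k \subset B_1$, $v_k = 0$ on $\partial U_k$, and $|\inf_{U_k} v_k|$ lies in a fixed positive interval depending only on $\sup|g_z|$ and $\inf|g_z|$.

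Apply the interior Pogorelov estimate (Lemma \ref{lem:int-pogorelov}) to $v_k$ on $U_k$. At the image point $\overline x_0 := T_k \tilde x_0$ one has $v_k(\overline x_0) = g_z(x_0, y_0, z_0) + O(h_k)$, which is uniformly bounded away from zero, so $\overline x_0$ lies in a fixed sublevel set to which Pogorelov applies; together with the uniform bounds on $\overline A_k$, $v_k$ and the good shape of $U_k$, this yields $|D^2 v_k(\overline x_0)| \leq C$ with $C$ independent of $k$. Differentiating the definition of $v_k$ twice gives
\[ (D^2 v_k(\overline x_0))_{ij} = \frac{r_i^{(k)} r_j^{(k)}}{h_k}\left[D^2 u(\tilde x_0) - D^2_x g(\tilde x_0, y_0, z_0 - h_k)\right]_{ij}, \]
and since $r_i^{(k)} r_j^{(k)}/h_k \sim 1$ and $|D^2_x g|$ is uniformly bounded on $\overline\Gamma$, we conclude $|D^2 u(\tilde x_0)| \leq C$; the smooth change of variables $x \leftrightarrow \tilde x$ (with derivatives controlled by $g$) then gives $|D^2 u(x_0)| \leq C$, which is the desired bound.

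The main technical obstacle is ensuring the Pogorelov estimate is genuinely uniform in $k$, since the formal ``section height'' parameter of $U_k$ with respect to the support $\overline g_k(\cdot, y_0, z_0)$ is $h_k \to 0$, and Lemma \ref{lem:int-pogorelov} lists $h$ among the parameters on which the constant depends. The resolution, visible in the proof of Lemma \ref{lem:int-pogorelov}, is that Pogorelov really produces a pointwise bound of the form $\eta w_{ii} \leq C$ with $C$ depending only on $\|\overline A_k\|_{C^2}$, $\|v_k\|_{C^1(U_k)}$ and good-shape parameters; in the normalized picture $\eta = -v_k$ is uniformly bounded below at $\overline x_0$ (since $\sup_{U_k}|v_k| \sim 1$ even though $h_k\to 0$), which delivers the uniform Hessian bound. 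The secondary issue of Lemma \ref{lem:int-pogorelov} being stated for constant right-hand side is handled by noting that $\overline B_k$ is uniformly pinched so the standard extension of Pogorelov to bounded positive right-hand sides applies.
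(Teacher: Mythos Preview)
Your route is quite different from the paper's, which is entirely elementary and does not invoke Pogorelov at all. The paper argues directly: good shape together with \eqref{eq:sec-ellips} forces $S_{h_k}\subset B_{C\sqrt{h_k}}(x_0)$; on the other hand a second-order Taylor expansion of $u(\cdot)-g(\cdot,y_0,z_0-h_k)$ at $x_0$ along the unit eigenvector for the smallest eigenvalue $\lambda$ of $w(x_0):=D^2u(x_0)-g_{xx}(x_0,y_0,z_0)$ produces a point of $S_{h_k}$ at distance $\sim\sqrt{h_k/\lambda}$ from $x_0$. Comparing the two gives $\lambda\geq c>0$, and then $\det w(x_0)=f(x_0)\leq\sup|f|$ bounds the largest eigenvalue. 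This is why the constant depends only on $\sup|f|$: the PDE is used only pointwise at $x_0$, and no regularity of $f$ enters.

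Your normalization-plus-Pogorelov argument can be made to work, but it is heavier, and your resolution of the non-constant right-hand side is not correct as written. There is \emph{no} ``standard extension of Pogorelov to bounded positive right-hand sides'': Pogorelov's own counterexamples show that interior $C^2$ estimates fail when $f$ is merely pinched between positive constants --- this is precisely why Theorem~\ref{thm:main2} requires Dini continuity. The usable extension of Lemma~\ref{lem:int-pogorelov} to non-constant $f$ carries a constant depending on $\|\overline B_k\|_{C^{1,1}}$ (and the proof differentiates the PDE twice, so one needs $u\in C^4$ rather than $C^3$). The honest fix is to observe that, because $r_i^{(k)}\sim\sqrt{h_k}$, the rescaling forces $\|D\overline B_k\|$ and $\|D^2\overline B_k\|$ to tend to $0$, so for $k$ large the Pogorelov constant depends only on the $C^0$ pinching of $f$; this salvages the approach for smooth a~priori solutions but is a substantial detour compared with the paper's short computation.
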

\begin{proof}
  Without loss of generality $x_0=0$ and the minimum ellipsoid of $S_{h_k}$ has axis $r_1^{(k)} \geq \dots \geq r_n^{(k)}$. Our assumption is $r_1^{(k)} \leq C_0 r_n^{(k)}$. Then, by \eqref{eq:sec-ellips} 
  \[C_0^{-n+1}(r_1^{(k)})^n \leq r_1^{(k)} \dots r_n^{(k)} \leq C |S_{h_k}|  \leq Ch_k^{n/2},  \]
  that is $r_1^{(k)} \leq C\sqrt{h_k}$. Moreover, because $r_1^{(k)}$ is the largest axis of the minimum ellipsoid $S_{h_k} \subset B_{2r_1^{(k)}}(0)$ so that
  \begin{equation}
    \label{eq:sec-contain}
      S_{h_k} \subset B_{C \sqrt{h_k}}(0). 
  \end{equation}
  
  Let $w_{ij} = u_{ij}(0)-g_{ij}(0,y_0,z_0)$ and denote the minimum eigenvalue of $w$ by $\lambda$ and corresponding normalized eigenvector by $x_\lambda$. Using a Taylor series we have
  \begin{align*}
    u(tx_\lambda) - g(t x_\lambda,y_0,z_0 - h) \leq t^2\lambda+O(t^3) - \inf |g_z|h_k.
  \end{align*}
Thus provided $k$ is taken sufficiently large $tx_\lambda \in S_{h_k}$ for $t = \sqrt{\inf|g_z|\frac{h_k}{2\lambda}}$. That is, there is $x \in S_{h_k}$ with 
  \begin{align}
    |x| &\geq C\sqrt{\frac{h_k}{\lambda}}.\label{eq:contained}
  \end{align}
 Combining \eqref{eq:sec-contain} and \eqref{eq:contained} we obtain a lower bound on $\lambda$ which implies an upper bound on the largest eigenvalue of $w$ by the PDE. 
\end{proof}

Before proving the $C^2$ estimate we state one final lemma. 
\begin{lemma}\label{lem:int-schauder}
  Assume $u_i$ for $i=1,2$ is a $C^{2,\alpha}$ solution of
  \begin{align*}
    \det [D^2u_i - A(\cdot,u_i,Du_i)] = f_i \text{ in }\Omega
  \end{align*}
  where $f_i$ is a H\"older continuous function and $\Vert u_i \Vert_{C^4} \leq K.$ For any $\Omega' \subset\subset\Omega$ there is $C>0$ depending on $K,g,\text{dist}(\partial \Omega,\Omega')$ such that
  \[ \Vert u_1-u_2 \Vert_{C^3(\Omega')} \leq C \Vert f_1-f_2\Vert_{C^\alpha(\Omega)} +\sup_{\Omega}|u_1-u_2|. \]
\end{lemma}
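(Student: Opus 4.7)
The plan is to derive a linear uniformly elliptic PDE for $w := u_1 - u_2$ and apply interior Schauder estimates. Subtract the two MATEs and use the identity
\[
    \det M_1 - \det M_2 = \biggl(\int_0^1 \mathrm{cof}\bigl(M_2 + t(M_1 - M_2)\bigr)_{ij}\, dt\biggr) (M_1 - M_2)_{ij},
\]
where $M_i := D^2 u_i - A(\cdot, u_i, Du_i)$. Expanding $A(\cdot, u_1, Du_1) - A(\cdot, u_2, Du_2)$ analogously via the fundamental theorem of calculus as a linear combination of $w$ and $Dw$ with coefficients integrated along the segment from $(u_2, Du_2)$ to $(u_1, Du_1)$, one then obtains a linear PDE
\[
    a^{ij}(x) D_{ij} w + b^k(x) D_k w + c(x) w = f_1 - f_2 \quad \text{in } \Omega.
\]

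Next I verify uniform ellipticity and coefficient regularity. The hypothesis $\Vert u_i \Vert_{C^4} \leq K$ bounds $\Vert M_i \Vert$ above, and the lower bound $\det M_i = f_i \geq \lambda > 0$ (implicit in the setting of Theorem \ref{thm:main2} where this lemma is applied) pinches the eigenvalues of each $M_i$ between positive constants depending on $K, \lambda, \Lambda$. Hence the integrated cofactor matrix $a^{ij}$ is uniformly positive definite. All three of $a^{ij}, b^k, c$ are bounded in $C^{1,\alpha}$ (indeed in $C^2$) with norms controlled by $K$ and $\Vert A \Vert_{C^3}$.

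Interior Schauder for linear elliptic equations with H\"older coefficients then yields
\[
    \Vert w \Vert_{C^{2,\alpha}(\Omega')} \leq C\bigl(\Vert f_1 - f_2 \Vert_{C^\alpha(\Omega)} + \sup_\Omega |w|\bigr),
\]
which is essentially the desired estimate. To promote this to the claimed $C^3$ bound, I would differentiate the linear PDE in a direction $e$ and apply Schauder to the equation for $v := D_e w$. The correction terms $(D_e a^{ij}) D_{ij} w$, $(D_e b^k) D_k w$, $(D_e c) w$ on the right-hand side are controlled by the $C^{2,\alpha}$ bound on $w$ together with the $C^3$ bounds on the coefficients, while $D_e(f_1 - f_2)$ is bounded in $L^\infty$ via $f_i = \det M_i$ and $\Vert u_i \Vert_{C^4} \leq K$.

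The main obstacle is this bootstrap from $C^{2,\alpha}$ to $C^3$: Schauder applied directly to the equation for $w$ gives at best $C^{2,\alpha}$, and the upgrade to $C^3$ genuinely uses the a priori $C^4$ hypothesis on $u_i$. The crude bound $\Vert D_e f_i \Vert_{L^\infty} \leq C(K)$ is non-uniform in $\Vert f_1 - f_2 \Vert_{C^\alpha}$ but enters only as an additive constant in the final estimate, which is permitted since the constant $C$ in the conclusion depends on $K$. A secondary technical issue is that the uniform ellipticity step requires $f_i$ bounded below, which is not spelled out in the lemma itself but is implicit in the context where it is applied.
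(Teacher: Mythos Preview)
Your main approach---subtract the two equations, write the difference as a linear uniformly elliptic operator acting on $w=u_1-u_2$ via the integrated cofactor matrix and the fundamental theorem of calculus applied to $A$, then invoke interior Schauder---is exactly what the paper does. The paper's proof is a two-line sketch citing the linearisation in \cite[Lemma 4.2]{MR2748621} and Theorem~6.2 of Gilbarg--Trudinger, so on the substance you match it.

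The one genuine issue is your handling of the upgrade from $C^{2,\alpha}$ to $C^3$. You write that the crude bound $\Vert D_e f_i\Vert_{L^\infty}\le C(K)$ ``enters only as an additive constant in the final estimate, which is permitted since the constant $C$ in the conclusion depends on $K$.'' This is not right: the constant $C$ in the lemma is multiplicative, not additive. If an additive $C(K)$ were allowed the lemma would be vacuous, since $\Vert w\Vert_{C^3}\le 2K$ holds trivially from the hypothesis, and the lemma would be useless for its intended purpose (showing $|D^2u_k-D^2u_{k+1}|\le C\nu_k$ with $\nu_k\to 0$). Your bootstrap as written therefore does not close.

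Two remarks resolve this. First, in the only place the lemma is invoked the $f_i$ are \emph{constants} (namely $f_k=\inf_{U_k}f$), so $D_e(f_1-f_2)=0$ and your differentiated equation has right-hand side controlled entirely by $C(K)\Vert w\Vert_{C^{2,\alpha}}$, which is already bounded by the desired quantity from the first Schauder step; the bootstrap then closes cleanly. Second, the application only uses the $D^2$ part of the conclusion, so the $C^{2,\alpha}$ estimate you obtain directly already suffices; the $C^3$ in the statement is stronger than what is needed and the paper does not justify it beyond the Schauder citation either.
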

\begin{proof}
  We linearise the PDE (as in \cite[Lemma 4.2]{MR2748621}), obtaining $L(u_1-u_2) = f_1-f_2 $ for a suitable linear operator with coefficients and ellipticity constants controlled by estimates $\Vert u_i \Vert_{C^4} \leq K.$ The lemma then follows from the classical Schauder theory \cite[Theorem 6.2]{MR1814364}.
\end{proof}

\subsection{Proof of the  $C^{2,\alpha}$ estimate}
\label{sec:c2-alpha-estimate}

As stated at the start of this section Theorem \ref{thm:main2} follows from an interior $C^2$ estimate. This is what we now prove.

\begin{theorem}
  Let $g$ be a generating function satisfying A3 and $u\in C^2(\Omega)$ be a $g$-convex solution of
  \[ \det [D^2u - A(\cdot,u,Du)] = f \text{ in }\Omega.\]
  If $f$ is Dini-continuous with $0 < \lambda \leq f \leq \Lambda< \infty$ then for each $\Omega' \subset\subset \Omega$ we have an estimate $\sup_{\Omega'}|D^2u| \leq C(f,g,\Omega,\Omega')$. 
  \end{theorem}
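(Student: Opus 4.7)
The plan is to reduce the theorem to a pointwise bound. Fix $x_0 \in \Omega'$ and observe that, by Lemma \ref{lem:shape-imp-deriv}, it suffices to produce a sequence $h_k \to 0$ such that each section $S_{h_k}(x_0)$ has a good shape constant bounded by a single universal constant depending only on the data. Uniformity in $x_0 \in \Omega'$ follows because all the constants produced along the way will depend only on $g, \lambda, \Lambda, \omega_f, \Omega, \Omega'$. I would construct the sequence inductively, using at each step the interior Pogorelov estimate of Lemma \ref{lem:int-pogorelov} for the associated constant-right-hand-side problem, together with the Dini modulus of $f$ to control the comparison error.

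At iteration $k$, I would normalize $u$ on $S_{h_k}(x_0)$ as in Section \ref{sec:norm-sect}, obtaining $v_k:U_k\to\mathbf{R}$ with $B_{1/n}\subset U_k \subset B_1$, $v_k=0$ on $\partial U_k$ and $\inf v_k \asymp -1$, solving a MATE with uniformly controlled $\bar{A}_k,\bar{B}_k$. Let $\bar v_k$ solve the same MATE on $U_k$ with $\bar{B}_k$ frozen to its value at the centre and matching boundary values. Lemma \ref{lem:int-pogorelov} applied to $\bar v_k$ gives a universal $C^2$ bound on a smaller concentric section $\{\bar v_k < -\tau\}$, so by Lemma \ref{lem:deriv-imp-shape} this smaller section has a universal good shape constant $C_\sharp$. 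For the comparison, the difference between the right-hand sides of the equations for $v_k$ and $\bar v_k$ is controlled by $\omega_f(d_k)$, where $d_k=\operatorname{diam} S_{h_k}(x_0)$, because the smooth $(x,u,p)$-dependence of $\bar B_k$ is uniformly controlled after normalization. Linearising as in Lemma \ref{lem:int-schauder} and invoking ABP yields $\sup_{U_k}|v_k-\bar v_k|\leq C\omega_f(d_k)$.

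The $C^0$-closeness transfers the shape control: the section $\{v_k<-\tau\}$ is sandwiched between the sections $\{\bar v_k<-\tau\pm C\omega_f(d_k)\}$, each of which has good shape at most $(1+C\omega_f(d_k))C_\sharp$ by the continuity of the shape at a regular level, which in turn follows from the uniform $C^2$ bound on $\bar v_k$ near its minimum. Unwinding the normalization and accounting for the dependence of the $\tilde x$-coordinates on the scale, this yields that $S_{h_{k+1}}(x_0)$, for a suitable $h_{k+1}<h_k$ (a fixed fraction of $h_k$ determined by $\tau$), has good shape bounded by $(1+C\omega_f(d_k))C_\sharp$. The quantitative strict convexity \eqref{eq:scd} gives $d_k\leq Ch_k^{1/(1+\gamma)}$, and the Dini hypothesis is equivalent to $\sum_k\omega_f(d_k)<\infty$ after the change of variables $r=s^{1/(1+\gamma)}$. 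Hence the product $\prod_k(1+C\omega_f(d_k))$ remains bounded and the induction closes.

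The hard part will be the comparison step: one needs a quantitative version of Lemma \ref{lem:int-schauder} with constants independent of the scale $k$, which in turn requires uniform $C^4$ estimates on the comparison solutions $\bar v_k$. These are available from the classical theory because each $\bar v_k$ satisfies a MATE with smooth coefficients and constant right-hand side, but one must track carefully that all estimates depend only on the structural data (in particular on $\Vert \bar A_k \Vert_{C^2}$ and the pinching of $\bar B_k$, both of which are controlled uniformly by the normalization procedure of Section \ref{sec:norm-sect} together with Lemma \ref{lem:lemma-shape-sections}) and not on $k$ itself.
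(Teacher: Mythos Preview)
Your strategy has the right architecture but the shape iteration does not close as written. The problematic step is ``unwinding the normalization'': after you normalize $S_{h_k}(x_0)$ by the affine map $T_k$, the sublevel set $\{v_k<-\tau\}$ indeed has good shape at most $(1+C\omega_f(d_k))C_\sharp$ \emph{in the normalized coordinates}, but applying $T_k^{-1}$ to return to the $\tilde x$-coordinates multiplies this by the eccentricity of $T_k$, which is precisely the good shape $e_k$ of $S_{h_k}(x_0)$. So the recursion you actually obtain is $e_{k+1}\le C_\sharp(1+C\omega_f(d_k))\,e_k$, and since Pogorelov only gives $C_\sharp$ bounded (not equal to $1$), this blows up geometrically regardless of the Dini hypothesis. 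Another way to see the step cannot be right: your claimed bound $e_{k+1}\le(1+C\omega_f(d_k))C_\sharp$ would give a uniform good-shape bound, and hence a $C^2$ estimate, for merely continuous $f$ once $\omega_f(d_k)$ is small --- and that is known to be false already for the Monge--Amp\`ere equation. The $C^0$ comparison you invoke simply does not carry enough information across scales; good shape is the eccentricity of the minimum ellipsoid, and composing with $T_k^{-1}$ compounds eccentricities.

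The paper repairs this by carrying second-derivative information, not just shape, through the iteration. After a single initial normalization one defines approximating solutions $u_k$ on the nested sections $U_k=\{u<g(\cdot,y_0,z_0-4^{-k})\}$ with frozen right-hand side $f_k=\inf_{U_k}f$, and proves inductively that $|D^2u_k|\le M$ on an inner section $V^{\tau_0}_k\supset U_{k+1}$. The inductive step uses Pogorelov and Evans--Krylov to get uniform $C^4$ bounds on both $u_k$ and $u_{k+1}$ on overlapping domains, then Lemma~\ref{lem:int-schauder} (which needs those $C^4$ bounds, as you correctly anticipated) to compare \emph{consecutive} approximations in $C^2$: one obtains $|D^2u_{k+1}-D^2u_k|\le C\nu_k\le C\omega_f(2^{-k})$. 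Telescoping gives the additive bound
\[
|D^2u_{k+1}|\le |D^2u_0|+C\sum_{i=0}^{k}\omega_f(2^{-i}),
\]
and now the Dini condition, via $\sum_i\omega_f(2^{-i})\le C\int_0^1\omega_f(r)/r\,dr$, is exactly what makes the right-hand side bounded uniformly in $k$. Good-shape control on every $U_k$ is then a \emph{consequence} of the $D^2u_k$ bound (Lemma~\ref{lem:deriv-imp-shape}), and Lemma~\ref{lem:shape-imp-deriv} finishes. The essential difference from your proposal is that the comparison is in $C^2$ and between adjacent scales, which makes the accumulated errors additive rather than multiplicative.
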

\begin{proof}
\textit{Step 1. [Setup of approximating problems]}  At the outset we fix $x_0 \in \Omega$, where without loss of generality  $x_0 = 0$. Consider
  \[S_h = \{ x; u(x) < g(x,y_0,z_0-h)\},\]
  where $g(\cdot,y_0,z_0)$ is the support at $0$ and $h>0$ is chosen small enough to ensure Lemma \ref{lem:int-pogorelov} applies. We normalize so that $B_{1/n} \subset S_h \subset B_1$ with $B_1$ the minimum ellipsoid. Moreover, we assume $h$ is chosen small enough to ensure that after this normalization
  \begin{equation}
    \label{eq:eps-def}
     \int_0^1\frac{\omega(r)}{r} < \epsilon,
  \end{equation}
  for an $\epsilon$ to be chosen in the proof (recall $\omega$ is from Definition \ref{defn:dini}). Note such a choice of $h$ is controlled by \eqref{eq:scd} and thus up to rescaling we assume $h=1$. 

W introduce a sequence of approximating problems. Define the domains
  \[ U_k = \{x; u(x) < g(x,y_0,z_0-1/4^k)\},\]
  and let $f_k = \inf_{U_k}f$.

Let  $u_k \in C^4(U_k)$ be the solution (whose existence is guaranteed by the Perron method) of 
  \begin{align}
   \label{eq:approx-prob} \det[D^2u_k-A(\cdot,u_k,Du_k)] = f_k \text{ in } U_k,\\
\label{eq:approx-bc}    u_k = u \text{ on }\partial U_k.
  \end{align}
In addition put 
\begin{align*}
  \nu_k = \sup_{x,y \in U_k} |f(x)- f(y)|.
\end{align*}
Using \eqref{eq:sec-ellips} the section  $U_k$ is contained in a ball of of radius $C2^{-k}$ for $C$ depending on the good shape constant of $U_k$. Thus when the good shape constant of $U_k$ is controlled $C \nu_k \leq \omega(2^{-k})=:\omega_k$ which we'll use at the conclusion of the proof. Lemmas \ref{lem:deriv-imp-shape} and \ref{lem:shape-imp-deriv} suggest the structure of our proof: It suffices to show a good shape constant of each $U_k$ is controlled by a fixed constant independent of $k$ and this, in turn, follows from a uniform estimate on each $|D^2u_k|$ inside a subsection. More precisely, we'll prove by induction that for each $k=0,1,2,\dots$ we have
\begin{equation}
  \label{eq:ind-hyp}
    |D^2u_k(x)| \leq \sup_{V^{\tau_0}_0}|D^2u_0| + 1 =: M,
 \end{equation}
 for any $x \in V^{\tau_0}_k$ which is defined by
 \[ V^{\tau}_k = \{x ; u_k(x) < g(x,y_0,z_0-\frac{1-\tau}{4^k})\}. \]
 By the uniform estimates in \cite{Rankin2021}\footnote{Use the upper bound \cite[Theorem 4]{Rankin2021} and corresponding lower bound obtained as in \cite[Theorem 6.2]{MR3067826}. } there is a  choice of $\tau_0$ sufficiently close to $0$ such that 
 \begin{align}
   \label{eq:x0-contain} x_0 &\in V^{\tau_0}_k,\\
   \label{eq:vtau-contain} U_{k+1} &\subset V^{\tau_0}_k.
 \end{align}

\textit{Step 2. [Induction base case: $k=0$]} It is clear that  \eqref{eq:ind-hyp} holds for $k=0$. However we note here that $M$ is controlled by the interior Pogorelov estimate, that is in terms of $\tau_0,h$ and our initial normalizing transformation which is, in turn, controlled by \eqref{eq:sec-ellips}.  

\textit{Step 3. [Inductive step]}
Now  we assume \eqref{eq:ind-hyp} up to some fixed $k$. We rescale our solution and domain by introducing
\begin{align*}
  &\overline{u}_k(\overline{x}):= 4^ku_k\left(\frac{\overline{x}}{2^k}\right) &&\overline{u}_{k+1}(\overline{x}):= 4^ku_{k+1}\left(\frac{\overline{x}}{2^k}\right),
\end{align*}
where $\overline{x} = 2^kx$. The function $\overline{u}_k$ solves
\begin{align*}
  &\det[D^2\overline{u}_k - \overline{A}(\cdot,\overline{u}_k,D\overline{u}_k)] = f_k \text{ in }\overline{U}_k\\
 & \overline{u}_k = 4^{-k}g(\cdot,y_0,z_0-h/4^k) \text{ on }\partial \overline{U}_k,
\end{align*}
for $\overline{U}_k := 2^kU_k$ and $\overline{A}(x,u,p) = A(2^{-k}x,4^{-k}u,2^{-k}p)$ and similarly for $\overline{u}_{k+1}$. Note this transformation does not change the magnitude of the second derivatives. Thus the inductive hypothesis \eqref{eq:ind-hyp} and Lemma \ref{lem:deriv-imp-shape} implies $\overline{U}_k$ has a good shape constant depending only on $M,f$ and the constant in the Pogorelov lemma. We claim that $\overline{U}_{k+1}$ has a good shape constant depending on the same parameters and the constants in \eqref{eq:sec-ellips}.  To see this assume the minimum ellipsoids of $\overline{U}_{k}$ and $\overline{U}_{k+1}$ have axis $R_1 \geq \dots \geq R_n$ and $r_1 \geq \dots \geq r_n$ respectively. By \eqref{eq:sec-ellips} applied to the section $\overline{U}_{k+1}$ we have
\[ C4^{-n(k+1)/2} \leq C |\overline{U}_{k+1}| \leq  r_1 \dots r_n \leq r_1^{n-1} r_n.  \]
Using this to compute an upper bound on $1/r_n$, we obtain
\begin{equation}
  \label{eq:good-shape-1}
  \frac{r_1}{r_n} \leq C4^{n(k+1)/2}r_1^n \leq  C4^{n(k+1)/2}R_1^n,  
\end{equation}
where the final inequality is because $\overline{U}_k$ is the larger section. Now, let $c_0$ be the good shape constant of $\overline{U}_k$, that is $R_1 \leq c_0R_n$. Using \eqref{eq:sec-ellips} again, this time applied to the section $\overline{U}_{k+1}$, we have
\begin{equation}
  \label{eq:good-shape-2}
   C4^{-nk/2} \geq R_1 \dots R_n \geq R_1 \dots R_n^{n-1} \geq c_0^{-(n-1)}R_1^{n}.
\end{equation}
Combining \eqref{eq:good-shape-1} and \eqref{eq:good-shape-2} implies the claimed fact that when $\overline{U}_k$ has good shape so does $\overline{U}_{k+1}$\footnote{To be explicit, because we are proving \eqref{eq:ind-hyp} by induction (not a good shape estimate by induction) it does not matter that the good shape constant of $\overline{U}_{k+1}$ is worse than $\overline{U}_k$.}. 

 Noting that $\Vert \overline{A} \Vert_{C^2} \leq \Vert A \Vert_{C^2}$ we obtain by Lemma \ref{lem:int-pogorelov} a $C^2$ estimate depending on allowed quantities and $M$ in $\overline{V}^{\tau_0/4}_k$.  Then the Evans--Krylov interior estimates imply $C^{2,\alpha}$ estimates in $\overline{V}^{\tau_0/2}_k$ and subsequently higher estimates by the elliptic theory. Similarly for $\overline{u}_{k+1}$ in the corresponding sections $\overline{V}^{\tau_0/4}_{k+1},\overline{V}^{\tau_0/2}_{k+1}$ .

Linearising and using the maximum principle on small domains(see \cite[Lemma A.3]{RankinThesis}) we obtain
\[ |\overline{u}_k-\overline{u}|,|\overline{u}_{k+1}-\overline{u}| \leq C\nu_k.\]
Thus $|\overline{u}_k - \overline{u}_{k+1}| \leq C\nu_k$ in $\overline{V}^{\tau_0/2}_{k+1}$. Then,  by Lemma \ref{lem:int-schauder}, in $V^{\tau_0}_{k+1}$ we have
\[ |D^2u_k(x) - D^2u_{k+1}(x)| \leq C \nu_k. \]
(Note we first obtain this for $\overline{u}_k,\overline{u}_{k+1}$ then use $D^2\overline{u}_k = D^2u_k$.) We've used that by \eqref{eq:vtau-contain} the estimates for $\overline{u}_k$ in $\overline{V}^{\tau_0}_{k}$ hold on the entire smaller section $\overline{U}_{k+1}$. 
Since all we've used is the induction hypothesis we can conclude the same inequality for $k$ replaced by $i = 0,1,2,\dots,k$. Moreover because these sections have a controlled good shape constant we have
\begin{equation}
  \label{eq:omega-est}
   | D^2u_i(x) - D^2u_{i+1}(x)| \leq C \omega_i = C\omega(2^{-i}),
\end{equation}
for $i = 0,1,2,\dots,k$.

Now, using \eqref{eq:omega-est}, the calculations are standard:
\begin{align*}
  \vert D^2u_{k+1}(x) \vert &\leq |D^2u_0(x)| + \sum_{i=0}^{k}|D^2u_{i}(x) - D^2u_{i+1}(x)|\\
                &\leq |D^2u_0(x)| +  C\sum_{i=0}^{k}2^{-i}\frac{\omega(2^{-i})}{2^{-i}}\\
                &\leq |D^2u_0(x)| + C\int_{2^{-k}}^1\frac{\omega(r)}{r} \ dr.
\end{align*}
Thus provided $\epsilon$ in \eqref{eq:eps-def} is taken sufficiently small we conclude by induction that  \eqref{eq:ind-hyp} holds for all $k$. By the rescaling used in the proof this implies a fixed good shape estimate for each $U_k$. Then the desired $C^2$ estimate  holds by Lemma \ref{lem:shape-imp-deriv}.
\end{proof}

\appendix

\section{Omitted calculations}
\label{sec:omitted-calculations}

\subsection{Proof that $ A_{kl,p_m}\dot{\gamma}_k\dot{\gamma}_l = 0$}
\label{sec:vanishing}

Let some initial coordinates, denoted $x$, be given and define $\overline{x} = \frac{g_y}{g_z}(x,y_0,z_h)$. For notation put $g_h(\cdot) = g(\cdot,y_0,z_h)$. We compute that for  $\xi,\eta$ in $\mathbf{R}^n$ 
\begin{align}
  \label{eq:changevar}
  \frac{\partial^2x_k}{\partial \overline{x}_i\partial \overline{x}_j}\xi_i\xi_j\eta_k &= g_z^{2}D_{p_k}A_{ij}(x,g_h(x),Dg_h(x))(E^{-1}\xi)_i(E^{-1}\xi)_j\eta_h \\
  \nonumber &\quad+ g_zg_{r,z}[E^{ir}E^{jk} + E^{ik}E^{jr}]\xi_i\xi_j\eta_k
\end{align}
Thus if our initial coordinates are the $\overline{x}$ coordinates and $\xi \cdot \eta = 0$
\begin{equation}
  \label{eq:van-term}
  D_{p_k}A_{ij}(x,g_h(x),Dg_h(x))\xi_i\xi_j\eta_k = 0.
\end{equation}
We recall from \cite[Eq. 2.3]{MR3121636} that
\begin{equation}
  \label{eq:dpgij}
   D_{p_k}g_{ij} = E^{r,k}[g_{ij,r} - g_{,r}g_{ij,z}g_z^{-1}].
\end{equation}

From the definition of $\overline{x}$ we compute
\begin{align}
\nonumber  \frac{\partial x_k}{\partial \overline{x}_j} = g_zE^{jk},\\
\label{eq:deriv-ident}  \frac{\partial }{\partial \overline{x}_i} = g_zE^{ir}\frac{\partial }{\partial x_r}.
\end{align}
 Applying \eqref{eq:deriv-ident} twice and using the identity for differentiating an inverse matrix gives
\begin{equation}
  \label{eq:dderiv-ident}
   \frac{1}{g_z}\frac{\partial^2x_k}{\partial \overline{x}_i\partial \overline{x}_j} = E^{ir}E^{jk}g_{r,z} - g_zE^{ir}E^{ja}(D_rE_{ab})E^{bk}.
\end{equation}
Now, by direct calculation
\[D_rE_{ab} = -\frac{g_{a,z}}{g_z}E_{rb} + g_{ar,b} - g_{ar,z}g_{,b}g_z^{-1}, \]
which when substituted into \eqref{eq:dderiv-ident} implies
\begin{align*}
  \frac{1}{g_z}\frac{\partial^2x_k}{\partial \overline{x}_i\partial \overline{x}_j} &= E^{ir}E^{jk}g_{r,z} + E^{ik}E^{jr}g_{r,z}\\
  &\quad\quad+g_zE^{jr}E^{ia}E^{bk}[g_{ar,b} - g_{ar,z}g_{,b}g_z^{-1}].
\end{align*}
Combined with \eqref{eq:dpgij} this implies \eqref{eq:changevar}. We note if our initial coordinates are the $\overline{x}$ coordinates then
\begin{align*}
   E_{ij}(x,g_h(x),Dg_h(x)) = g_{ij}(x,y_0,z_h) - \frac{g_{i,z}g_j}{g_z}(x,y_0,z_h) = g_z\frac{\partial \overline{x}_j}{\partial \overline{x}_i} = \delta_{ij}.
\end{align*}
This implies \eqref{eq:van-term}.

\subsection{Quantitative convexity via duality}
\label{sec:quant-conv-via}

Here we prove the first inequality in \eqref{eq:scd} assuming the second inequality. We follow the duality argument in \cite{MR2748621} simplified by the transformation in \cite{Rankin2021}. Indeed by \cite[Lemma 3]{Rankin2021} it suffices to prove the result at the origin for the generating function
\begin{align}
  \label{eq:gen-exp}
  &g(x,y,z) = x \cdot y -z + a_{i,jk}x_iy_jy_k - f(x,y,z)z,\\
\nonumber  &f(x,y,z) = b^{(1)}_{ij}x_ix_j+b^{(2)}_{ij}x_iy_j+b^{(3)}_{ij}y_iy_j + f^{(2)}(x,y,z)z,
\end{align}
where $a_{ij,k},b^{k}_{ij},f^{(2)}$ are $C^1$ functions. Moreover we assume $u$ is a  strictly $g$-convex function satisfying $u \geq 0$ and $Du(0),u(0) = 0$. We need to prove $u(x) \geq C|x|^{1+\gamma}$. Throughout the proof we use the notation $O(x^p)$ to denote any function $h(x,y,z)$ satisfying an estimate $|h(x,y,z)| \leq C|x|^p$ on a neighbourhood of the origin for $C$ depending only on $\Vert g \Vert_{C^4}$. Similarly for the notation $O(y^p)$. 

The $g^*$-transform of $u$ is $v:Yu(\Omega) \rightarrow \mathbf{R}$ defined by
\[ v(y) = g^*(x,y,u(x)),\]
for $x$ satisfying $y = Yu(x)$ and $g^*$ the dual generating function (see \cite{Rankin2021}). 
The function $v$ is $g^*$-convex with  $g^*$-support $g^*(0,\cdot,0)$ at $0$ and the $C^{1,\alpha}$ result, which holds by duality, implies $|v(y)| \leq \overline{C}|y|^{1+\alpha}$. Thus, by duality,
\begin{align}
  \label{eq:3}
   u(x) = \sup_{y}g(x,y,v(y)) \geq \sup_{y}g(x,y,\overline{C}|y|^{1+\alpha}).
\end{align}
We also note by the $C^{1,\alpha}$ estimate for $u$,  $|Y(x,u(x),Du(x))| \leq C|x|^\alpha$, so we can assume throughout that the neighbourhood over which the supremum is taken, and subsequently $|y|$, is sufficiently small.  
The supremum is obtained for $y$ satisfying
\begin{equation}
  \label{eq:max-point}
   g_y(x,y,\overline{C}|y|^{1+\alpha}) + \overline{C}(1+\alpha)g_z(x,y,\overline{C}|y|^{1+\alpha})|y|^{\alpha-1}y = 0.
\end{equation}
Now using \eqref{eq:gen-exp}
\begin{align}
\nonumber  u(x) &\geq   g(x,y,\overline{C}|y|^{1+\alpha}) \\
\nonumber  &= x \cdot y -\overline{C}|y|^{1+\alpha}+O(y^2)  -\overline{C}f(x,y,z)|y|^{\alpha+1}.\\
\label{eq:fin-ret}  & \geq x \cdot y -\overline{C}|y|^{1+\alpha}+O(y^2) +[O(x)+O(y)]|y|^{\alpha+1}.
\end{align}

Now we use \eqref{eq:max-point} to estimate $x \cdot y$ from below. First note
\begin{equation}
  \label{eq:gy-est}
   g_y(x,y,\overline{C}|y|^{1+\alpha})  = x + O(x)O(y) + [O(x)+O(y)]|y|^{1+\alpha}. 
\end{equation}
Thus substituting into \eqref{eq:max-point} and taking an inner product with $y$ we obtain (near the origin in $x,y$)
\begin{align*}
  x \cdot y = \overline{C}(1+\alpha)|g_z(x,y,\overline{C}|y|^{1+\alpha})||y|^{\alpha+1}+O(y^2) +[O(x)+O(y)]|y|^{\alpha+1},
\end{align*}
and subsequently returing to \eqref{eq:fin-ret} implies
\begin{align*}
  u(x) &\geq\overline{C}|y|^{\alpha+1} \left[(1+\alpha)|g_z(x,y,\overline{C}|y|^{1+\alpha})| -1\right]\\
  &\quad+O(y^2) +[O(x)+O(y)]|y|^{\alpha+1}.   
\end{align*}
Since $|g_z|$ is as close to $1$ as desired on a sufficiently small neighbourhood of the origin we have
\[ u(x) \geq C(\alpha)|y|^{\alpha+1}.\]
However also from \eqref{eq:max-point} and \eqref{eq:gy-est}
\begin{align*}
  |x| &= C(1+\alpha)|g_z||y|^\alpha+ O(x)O(y) + O(y^{\alpha+1})\\
  &\leq C|y|^{\alpha}.
\end{align*}
This completes the proof.

\bibliographystyle{plain}
\bibliography{../bibliography} %name.bib should exist in this directory
\end{document}